\newcommand\intt{\operatorname{int}}
\newcommand\pr{\operatorname{pr}}
\newcommand\cov{\operatorname{cov}}
\theoremstyle{plain}
\newtheorem{theorem}{Theorem}[section]
\newtheorem{lemma}[theorem]{Lemma}
\newtheorem{proposition}[theorem]{Proposition}
\newtheorem{corollary}[theorem]{Corollary}
\newtheorem{definition}[theorem]{Definition}
\theoremstyle{definition}
\newtheorem{question}{Question}
\theoremstyle{remark}
\numberwithin{equation}{section}
\title{
Densely $k$-separable compacta are densely separable
}
\date{\today}
\author{Alan Dow}
\author{Istv\'an Juh\'asz}
\begin{document}
\begin{abstract}
A space has $\sigma$-compact tightness if the closures of
$\sigma$-compact  subsets determines the topology. We consider a dense
set variant that we call densely $k$-separable. We consider
 the question of whether every densely $k$-separable  space
 is separable.  The somewhat
surprising answer is that this property, for compact spaces,
 implies that every dense set is separable. The path to this result
 relies on the known connections
 established between  $\pi$-weight and the 
density of all  dense subsets, or more precisely, the
 cardinal invariant $\delta(X)$.
\end{abstract}

\maketitle

\section{Introduction}

In \cite{ArhStav}, Arhangel' ski{\u\i} and Stavrova
 explored generalizations of the usual
notion of tightness. In particular, they introduced the cardinal
invariant $t_k^*(X)$ of a space $X$, which, in the countable case has
been  called 
$\sigma$-compact tightness. It was proven that in the class of compact
spaces this invariant corresponded to the tightness, $t(X)$, of
$X$. They raised the question of whether $\sigma$-compact tightness
implied tightness in all Hausdorff spaces. Some partial results were
obtained in \cite{DowMoore} but the question remains open. This, and
other generalizations of  tightness were considered in
\cite{JuhvM}. The authors of \cite{JuhvM} formulated a related concept,
 that we will call 
densely $k$-separable,
and posed
 (unpublished)   the
 problem of whether every compact space that 
was densely $k$-separable
was, in fact,  separable. We answer this
 question and discover the, unexpectedly strong, answer that such
 spaces will actually have countable $\pi$-weight.
 The definition (\ref{dk*}) is analogous to $t_k^*(X)$ but
we adopt the notation $\delta_k(X)$  
because
 it  also bears similarities to the cardinality
invariant $\delta(X)$ which is the supremum of the densities of dense
subsets, and the proof of our main result uses the main
 result from \cite{JuhSh} that $\delta(X) = \pi(X)$ for compact spaces
 $X$ (see also \cites{Sapirovski,10years}).  
In the next section we establish some basic properties of
 $\delta_k(X)$ that we will need for the proof of the main result. In the
 final section we only consider the case that $\delta_k(X)$ is countable
 and we  prove the main result. Throughout the paper, when we refer to
 a topological space we intend that the space be a regular Hausdorff
 space. A subset $W$ of a space $X$ is regular closed if $W$ is equal
 to the closure of its interior $\intt_X(W)$. We will work with
 regular closed sets rather than their interiors, 
i.e. regular open sets,  since regular closed subsets of compact
spaces are compact.

 For a space $X$, the compact covering number, $\cov_k(X)$, is
 the least cardinal $\tau$ such that $X$ can be covered by 
 $\tau$ many compact sets.
In \cite{ArhStav}, the 
value of $t_k^*(X)$ is the least cardinal $\tau$
satisfying that the closure of every subset $A$ of $X$ is equal to the
union of the closures of subsets $Y$ of $A$ satisfying that
 $\cov_k(Y)\leq\tau$.
Following
that theme we introduce  dense set variations.

 \begin{definition} 
For a space $X$ we set\label{dk*} 
$$d_k(X)=\min\{ \cov_k(Y) :  Y \ \mbox{is a dense subset of }\ X\}$$
and then 
$$\delta_k(X)=\sup\{ d_k(Y) :  Y \ \mbox{is a dense subset of }\ X\}~~.$$
\end{definition}

Then the natural problem  to ask 
is under what conditions on a space $X$ do we have
$              \delta_k(X) =  \delta(X)$?
It is easy to find examples of $\sigma$-compact
non-separable spaces which witness
 $d_k(X) < d(X)$ can hold. 
  We do not know  any example of a space $X$ for which
  $\delta_k(X) < \delta(X)$ and raise it as an open problem.

  \begin{question} Is there a
     regular    Hausdorff  space    $X$ such that
$\delta_k(X) <\delta(X)$?
  \end{question}

It is natural to call  a space $X$ \(k\)-\textit{separable\/}
if  $ d_k(X) \le \aleph_0$   and 
\textit{densely \(k\)-separable\/}  if $\delta_k(X) \le \aleph_0$.
The question, posed by Jan van Mill,
 that motivated this paper was whether every compact space
$X$ with $\delta_k(X)=\aleph_0$ is separable.
We succeed in  establishing the stronger
result that compact densely $k$-separable spaces
have  countable $\pi$-weight. The
$\pi$-weight, $\pi(X)$, of a space $X$ is the minimum cardinality of a
$\pi$-base for $X$. A $\pi$-base for a space $X$ is a family of
non-empty open sets that has the property that every non-empty open
subset of $X$ includes a member of that family.  A local $\pi$-base at
a point $x$ is a family of non-empty open sets with the property that
every neighborhood of the point includes a member of that family.
One of the main
elements of the proof is the connection to $\delta(X)$ mentioned
above. It is shown in \cite{JuhSh} that
for compact Hausdorff space $X$, 
 $\delta(X) = \pi(X)$ and 
we will need the following stronger result  from \cite{JuhSh}.

\begin{proposition}[\cite{JuhSh}]
For each compact space $X$, there is a dense\label{delta}
subset $Y$ of $X$ such that  $d(Y)  = \pi(X)$.
\end{proposition}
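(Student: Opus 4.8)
\emph{A plan.}

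The bound $d(Y)\le\pi(X)$ for every dense $Y\subseteq X$ is immediate: from a $\pi$-base $\mathcal B$ of $X$ of size $\pi(X)$, picking one point of $B\cap Y$ for each $B\in\mathcal B$ gives a dense subset of $Y$. So, writing $\kappa=\pi(X)$, the task reduces to producing \emph{one} dense set $Y$ with $d(Y)\ge\kappa$ --- that is, a dense set no subset of which of size $<\kappa$ is dense (equivalently, has closure $X$).

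The central idea, carried out first for \emph{regular} $\kappa$, is to build $Y$ as an increasing continuous union $Y=\bigcup_{\alpha<\kappa}Y_\alpha$ with $|Y_\alpha|<\kappa$ and $\overline{Y_\alpha}\ne X$ for all $\alpha$. This suffices, since for $A\in[Y]^{<\kappa}$ regularity of $\kappa$ gives $A\subseteq Y_\alpha$ for some $\alpha$, whence $\overline A\subseteq\overline{Y_\alpha}\ne X$; so $d(Y)=\kappa$. The recursion is driven by a fixed $\pi$-base $\{U_\xi:\xi<\kappa\}$ and a bookkeeping function returning to each $\xi$ cofinally often; at the stage assigned $\xi$ one adds a point of $U_\xi$ to $Y$, so that in the end $Y$ meets every $U_\xi$ and is dense. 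Keeping the $Y_\alpha$ non-dense is the work. A natural device is to carry along a \emph{reservoir}: a regular closed $R_\alpha$ with $\intt R_\alpha\ne\emptyset$, $\overline{Y_\alpha}\cap\intt R_\alpha=\emptyset$, and --- crucially --- $\pi(R_\alpha)\ge\kappa$, which one shrinks only when forced to populate $\intt R_\alpha$; and then one may always split $\intt R_\alpha$ into two disjoint nonempty open pieces one of which, $W$, still satisfies $\pi(\overline W)\ge\kappa$, because $\pi$-weight is subadditive over such open partitions and $\kappa$ is not a sum of two smaller cardinals. (An alternative is to keep $\overline{Y_\alpha}$ trapped in a slowly growing nowhere dense closed set, as in a $\sigma$-product.)

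The main obstacle is the limit stages, and it is exactly there that one must use $\pi(X)=\kappa$ rather than a mere bound on $|X|$ or $d(X)$: at a limit $\delta<\kappa$ the reservoirs form a descending chain, so $\bigcap_{\alpha<\delta}R_\alpha\ne\emptyset$ by compactness, but its interior may be empty, and the bare bound $|Y_\delta|<\kappa$ does not force $\overline{Y_\delta}\ne X$, since a compact space of $\pi$-weight $\kappa$ can be separable. The way through should be to maintain not a single reservoir but a descending tree of candidate reservoirs --- at each level a maximal family of pairwise disjoint regular closed sets of $\pi$-weight $\ge\kappa$, refining the previous level --- so that at a limit one can pass to a surviving node; that such a tree can always be refined rests, once more, on the fact that fewer than $\kappa$ open sets never $\pi$-generate a piece of $\pi$-weight $\kappa$. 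Getting this to cohere is, I expect, the one conceptually hard point; morally $Y$ is a $\sigma$-product-like dense set spread independently over $\kappa$ ``coordinates'', so that any piece of size $<\kappa$ touches fewer than $\kappa$ coordinates and misses a whole cylinder.

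For \emph{singular} $\kappa$ the filtration argument fails --- a ``diagonal'' subset of size $\cf(\kappa)$ may be dense --- so this case needs a separate treatment. Where the $\pi$-weight ``spreads out'', one cuts $X$ into $\cf(\kappa)$ many nonempty open sets with pairwise disjoint closures whose $\pi$-weights are cofinal in $\kappa$, applies the regular case inside each, and amalgamates; where it ``concentrates'' --- as for a one-point compactification of a discrete set of size $\kappa$ --- a dense \emph{discrete} subset of size $\kappa$ does it outright; a combination handles the general case. All of this I expect to be technical rather than conceptually deep, the genuine difficulty being confined to the limit-stage bookkeeping of the regular case.
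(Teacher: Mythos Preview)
The paper does not prove this proposition: it is stated with the attribution \cite{JuhSh} and no argument is supplied in the text. There is therefore no in-paper proof to compare your proposal against; the authors simply import the Juh\'asz--Shelah result as a black box and move on to Corollary~\ref{ctbly}.

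As a sketch toward the Juh\'asz--Shelah theorem itself, your plan correctly isolates the trivial inequality $d(Y)\le\pi(X)$ and correctly identifies the limit-stage bookkeeping in the regular case as the crux. But what you have written is explicitly a plan, not a proof, and the genuine gap is precisely where you flag it: the ``tree of reservoirs'' device is only gestured at, and the claim that at a limit $\delta$ one can ``pass to a surviving node'' retaining both nonempty interior and $\pi$-weight $\ge\kappa$ is exactly the step that requires a real argument --- compactness along a branch gives a nonempty intersection of closed sets, not a nonempty \emph{interior}, and the subadditivity remark about $\pi$-weight does not by itself control what happens after $\cf(\delta)$ many shrinkings. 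Your singular-case outline (split into cofinally many pieces of regular $\pi$-weight, or find a large discrete set) is plausible in spirit but again only an outline. None of this is misguided as orientation; it simply does not amount to a proof, and since the present paper itself defers entirely to \cite{JuhSh} for this proposition, the appropriate move here is to cite rather than to reprove.
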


A subset $Y$ of a space $X$ is $G_\delta$-dense if $Y$ meets every
non-empty $G_\delta$ subset of $X$.

\begin{corollary} For each compact space $X$ there is a $G_\delta$-dense
  subset $Y$ of $X$ such\label{ctbly}
 that $d(Y) = \pi(X)$.
\end{corollary}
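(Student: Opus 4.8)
The plan is to invoke Proposition~\ref{delta} and then enlarge the dense set it produces to a $G_\delta$-dense one without lowering the density. First dispose of the trivial case $d(X)=\pi(X)$: here $Y=X$ works, since $X$ meets every nonempty $G_\delta$ of itself. So assume $d(X)<\pi(X)=:\kappa$ and, using Proposition~\ref{delta}, fix a dense $Y_0\subseteq X$ with $d(Y_0)=\kappa$; equivalently, every subset of $Y_0$ that is dense in $X$ has cardinality $\ge\kappa$. I would then look for a set $S\subseteq X$ with two properties: (i) $S$ meets every nonempty zero-set of $X$ that is disjoint from $Y_0$ (note each such zero-set is nowhere dense, being a closed set disjoint from the dense set $Y_0$), so that $Y_0\cup S$ meets every nonempty zero-set of $X$; and (ii) every subset of $S$ of cardinality $<\kappa$ has nowhere dense closure.

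Granting such an $S$, set $Y=Y_0\cup S$. Since a nonempty $G_\delta$ set in a compact Hausdorff space contains a nonempty zero-set, property (i) makes $Y$ meet every nonempty $G_\delta$, so $Y$ is $G_\delta$-dense. As $Y\supseteq Y_0$ and a dense subset of $Y_0$ is dense in $X$ hence in $Y$, we get $d(Y)\le d(Y_0)=\kappa$. Conversely, if some $D\subseteq Y$ were dense in $X$ with $|D|<\kappa$, then $D\cap Y_0$ would be a subset of $Y_0$ of size $<\kappa$, hence not dense in $X$, so $U:=X\setminus\overline{D\cap Y_0}$ is nonempty open; but $D\cap U\subseteq S$ and $D\cap U$ is dense in $U$, so $\overline{D\cap S}\supseteq U$ has nonempty interior, contradicting (ii) since $|D\cap S|<\kappa$. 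Hence $d(Y)=\kappa$, as required.

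To build $S$, I would enumerate the nonempty zero-sets of $X$ disjoint from $Y_0$ as $\langle Z_\xi:\xi<\mu\rangle$ for a suitable ordinal $\mu$ and choose $y_\xi\in Z_\xi$ recursively, maintaining the invariant that every subset of $\{y_\eta:\eta\le\xi\}$ of cardinality $<\kappa$ has nowhere dense closure; then $S=\{y_\xi:\xi<\mu\}$. Since every isolated point of $X$ lies in the dense set $Y_0$, each $Z_\xi$ consists of points that are non-isolated in $X$, and adjoining one non-isolated point to a closed nowhere dense set keeps it nowhere dense; thus the invariant is automatic at successor stages. The whole difficulty is at limit stages, where one must ensure that the points chosen so far do not accumulate densely in any nonempty open set; here I would exploit the freedom of choosing $y_\xi$ anywhere in the compact set $Z_\xi$ — using Proposition~\ref{delta} applied to $Z_\xi$ to control its dense subsets — together with bookkeeping over the previously chosen points and over the open sets of density $<\kappa$, to pick $y_\xi$ "in general position". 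The main obstacle is proving that the recursion never gets stuck, i.e. that such a choice always exists: the tension is that $S$ must be rich enough to meet all the (possibly very many) zero-sets missed by $Y_0$, yet no fewer than $\kappa$ of the chosen points may become dense anywhere. (In the model case where $X=2^{\lambda}$ and $Y_0$ is the $\sigma$-product, one can write down such an $S$ explicitly — for instance the characteristic functions of the countably infinite subsets of $\lambda$ — and the construction above is the general analogue of that; I expect the verification in general to be the technical heart of the argument.)
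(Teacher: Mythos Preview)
Your reduction to finding a ``thin'' transversal $S$ is reasonable in spirit, but the proof has a genuine gap precisely where you say it does: the construction of $S$ is not carried out, and the recursive invariant you propose is not obviously maintainable. At a limit stage $\xi$ of cofinality $<\kappa$, a subset of $\{y_\eta:\eta<\xi\}$ of size $<\kappa$ can be cofinal in $\xi$, so the inductive hypothesis on proper initial segments says nothing about it; whether such a subset has nowhere dense closure is already determined \emph{before} you choose $y_\xi$, and no ``freedom'' in placing $y_\xi$ inside $Z_\xi$ can repair a failure that has already occurred. Since the enumeration of zero-sets may have length far exceeding $\kappa$, such limit stages are unavoidable, and a union of fewer than $\kappa$ nowhere dense sets need not be nowhere dense. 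The bookkeeping you allude to would have to anticipate, at earlier stages, all the ways small cofinal subsets might later become somewhere dense --- and you have not indicated any mechanism for this. As it stands, the argument is a plausible outline whose ``technical heart'' is missing and may well be false in general.

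The paper avoids this difficulty entirely with a one-line idea: starting from the dense $Y_0$ given by Proposition~\ref{delta}, take
\[
Y=\operatorname{cl}_{\aleph_0}(Y_0)=\bigcup\{\overline{A}:A\subseteq Y_0,\ |A|\le\aleph_0\}.
\]
Any subset of $Y$ of infinite cardinality $\mu$ lies in the closure of a subset of $Y_0$ of the same cardinality $\mu$; hence if $\mu<\pi(X)$ it cannot be dense, so $d(Y)\ge\pi(X)$, while $Y\supseteq Y_0$ gives $d(Y)\le\pi(X)$. Because $X$ is compact, $Y$ is countably compact and dense, hence $G_\delta$-dense. This replaces your delicate transfinite construction by a single closure operation and a two-line density computation; no recursion, no bookkeeping, and no obstacle at limit stages.
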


\begin{proof}
Applying  Proposition \ref{delta} we choose  a dense set
 $Y\subset X$ so that no subset of $Y$ of cardinality less than
$\pi(X)$ is dense. Now let $\operatorname{cl}_{\aleph_0}(Y) =
 \bigcup \{ \overline{A} : A\subset Y
 \ \mbox{and}\ \ |A|=\aleph_0\}$. For every infinite cardinal
$\mu \leq |Y|$, each subset of $
\operatorname{cl}_{\aleph_0}(Y) $ of cardinality $\mu$
 is contained in the closure of a subset of $Y$ of cardinality
 $\mu$. Therefore $
\operatorname{cl}_{\aleph_0}(Y) $ has no dense subset of cardinality
 less
 than $\pi(X)$. Since $X$ is compact, $
 \operatorname{cl}_{\aleph_0}(Y) $ is countably
 compact and dense. It is now immediate that $
 \operatorname{cl}_{\aleph_0}(Y) $ is
 $G_\delta$-dense in $X$.    
\end{proof}

\section{Elementary properties of $\delta_k(X)$}

A map $f$ from a space $X$ to a space $Y$ is said to be irreducible if
$f$ is onto and
the images of   proper closed subsets of $X$ are proper subsets of $Y$. 
When $f$ is continuous and irreducible, it follows that any subset of
$X$ that maps to 
a dense subset of $Y$ must itself be dense in $X$.
Each continuous
irreducible map is quasi-open. A map $f$ is quasi-open if the image of
every non-empty open set has non-empty interior.

\begin{proposition}  If $f$ is a continuous irreducible map from a
  space $X$ onto $Y$, then\label{irred}
$\delta(X)=\delta(Y)$,  $\pi(X)=\pi(Y)$,
   and $\delta_k(X)\leq \delta_k(Y)$. 
\end{proposition}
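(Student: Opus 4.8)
The plan is to prove the three conclusions in turn, using only that $f$ is onto, quasi-open, and reflects density (a subset of $X$ whose image is dense in $Y$ is itself dense in $X$), as recorded before the statement; for the $\pi$-weight equality I will also use that $f$, being irreducible, is a closed map, and for the last inequality that it is perfect. Two elementary bookkeeping moves recur. First, for $B\subseteq Y$ one has $f[f^{-1}[B]]=B$ since $f$ is onto, so $f^{-1}[B]$ is dense in $X$ exactly when $B$ is dense in $Y$. Second, for $A\subseteq X$, choosing one point of $A\cap f^{-1}(y)$ for each $y\in f[A]$ yields a ``section'' $A'\subseteq A$ with $f[A']=f[A]$ and $|A'|\le|f[A]|$, which by density reflection is dense in $X$ whenever $f[A]$ is dense in $Y$.

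For $\pi(X)=\pi(Y)$: if $\mathcal B$ is a $\pi$-base for $X$ then $\{\intt_Y(f[B]):B\in\mathcal B\}$ is a $\pi$-base for $Y$ (each member is nonempty because $f$ is quasi-open, and lies inside a prescribed nonempty open $V\subseteq Y$ once $B\subseteq f^{-1}[V]$ is chosen), so $\pi(Y)\le\pi(X)$. Conversely, if $\mathcal C$ is a $\pi$-base for $Y$ then $\{f^{-1}[V]:V\in\mathcal C\}$ is a $\pi$-base for $X$: for nonempty open $U\subseteq X$, irreducibility makes $Y\setminus f[X\setminus U]$ nonempty and closedness of $f$ makes it open, so some $V\in\mathcal C$ lies inside it and then $\emptyset\ne f^{-1}[V]\subseteq U$; hence $\pi(X)\le\pi(Y)$. (Alternatively one may invoke that an irreducible surjection induces an isomorphism of the regular-closed algebras of $X$ and $Y$ and that $\pi$-weight is the least size of a dense subfamily of that algebra.)

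For $\delta(X)=\delta(Y)$: given a dense $D\subseteq Y$, the set $f^{-1}[D]$ is dense in $X$; if $E_0$ is dense in $f^{-1}[D]$ then $E_0$ is dense in $X$, so $f[E_0]$ is dense in $Y$, and as $f[E_0]\subseteq f[f^{-1}[D]]=D$ it is dense in the subspace $D$, giving $d(D)\le|E_0|$; hence $d(D)\le d(f^{-1}[D])\le\delta(X)$, and the supremum over $D$ yields $\delta(Y)\le\delta(X)$. For the reverse, a dense $E\subseteq X$ has $D:=f[E]$ dense in $Y$; any dense $D_0\subseteq D$ lifts via the section move to a dense $E_0\subseteq E$ with $|E_0|\le|D_0|$, whence $d(E)\le d(D)\le\delta(Y)$, and the supremum over $E$ yields $\delta(X)\le\delta(Y)$.

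For $\delta_k(X)\le\delta_k(Y)$: fix a dense $E\subseteq X$; it suffices to bound $d_k(E)$ by $\delta_k(Y)$. Since $f[E]$ is dense in $Y$, choose a dense $D'\subseteq f[E]$ with $\cov_k(D')\le\delta_k(Y)$, say $D'=\bigcup_{\alpha<\delta_k(Y)}K_\alpha$ with each $K_\alpha$ compact. Then $E':=E\cap f^{-1}[D']=\bigcup_\alpha\bigl(E\cap f^{-1}[K_\alpha]\bigr)$ satisfies $f[E']=D'$, so $E'$ is dense in $X$ and therefore dense in $E$; and, $f$ being perfect, each $f^{-1}[K_\alpha]$ is compact, so $E'$ is covered by the $\delta_k(Y)$ many compacta $f^{-1}[K_\alpha]$, which bounds $\cov_k(E')$, and hence $d_k(E)$, by $\delta_k(Y)$. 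Taking the supremum over $E$ finishes the proof. The step I expect to require the most care is precisely this last one: it is where more than bare continuity and irreducibility enters — one needs preimages of compacta to be compact, which is why the inequality is only one-sided — and one should verify that the sets $E\cap f^{-1}[K_\alpha]$, lying inside the compacta $f^{-1}[K_\alpha]$, legitimately witness $\cov_k(E')\le\delta_k(Y)$.
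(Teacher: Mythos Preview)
Your arguments for $\delta(X)=\delta(Y)$ and $\pi(X)=\pi(Y)$ follow the paper's essentially verbatim. The real problem is the third conclusion.

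First, a warning: the displayed inequality in the statement is a misprint. The paper's own proof establishes $\delta_k(Y)\le\delta_k(X)$, not $\delta_k(X)\le\delta_k(Y)$, and it is the former that is used downstream (see Corollary~\ref{quasid} and Lemma~\ref{reflect}). The paper's argument is the obvious one-liner: for dense $D\subseteq Y$, pull back to the dense set $f^{-1}(D)\subseteq X$, cover it densely by $\delta_k(X)$ many compacta $A_\alpha\subseteq f^{-1}(D)$, and push forward; the $f(A_\alpha)$ are compact subsets of $D$ whose union is dense in $Y$. No perfectness is needed, only continuity and density reflection.

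You instead prove the printed direction $\delta_k(X)\le\delta_k(Y)$, and this is where things break. You assert that an irreducible continuous surjection is closed and perfect; neither follows from the hypotheses (for instance, $t\mapsto e^{2\pi i t}$ from $[0,1)$ onto $S^1$ is continuous, onto, and irreducible, but not closed). More seriously, even granting perfectness your last step fails: the sets $E\cap f^{-1}[K_\alpha]$ lie inside the compacta $f^{-1}[K_\alpha]$, but they need not be closed in them, so they need not be compact, and they do not witness $\cov_k(E')\le\delta_k(Y)$. Concretely, take $X=[0,1]$, $Y=\{\ast\}$, $f$ constant (continuous, irreducible, perfect), and $E=\mathbb{Q}\cap[0,1]$: then $D'=K_0=\{\ast\}$, $f^{-1}[K_0]=[0,1]$, and $E\cap f^{-1}[K_0]=\mathbb{Q}\cap[0,1]$ is not compact. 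In fact this example shows the inequality $\delta_k(X)\le\delta_k(Y)$ is simply false: $\delta_k([0,1])=\aleph_0>1=\delta_k(\{\ast\})$. So the direction you attempted cannot be salvaged; the intended content is the opposite inequality, with the push-forward proof above.

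A minor related point: for $\pi(X)\le\pi(Y)$ you invoke closedness of $f$ to make $Y\setminus f[X\setminus U]$ open. The paper simply asserts that $\{f^{-1}(B):B\in\mathcal B\}$ is a $\pi$-base for $X$ without isolating this step; your instinct to justify it is right, but ``irreducible $\Rightarrow$ closed'' is not the way.
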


\begin{proof}
  Let $f, X$ and $Y$ be as in the statement.
  If $D\subset Y$ is dense, then the density of $D$ is at most
  the density of 
  $f^{-1}(D)$.  Similarly, if $D$ is a dense subset of $X$,
   then $f(D)$ is dense in $Y$ and so there is a dense
   $E\subset f(D)$ of cardinality at most $\delta(Y)$. 
   Any subset $E'$ of $D$ that maps onto $E$ will be dense in $X$,
    hence the density of $D$ is at most $\delta(Y)$. 
 If $\mathcal B $ is a
$\pi$-base for $X$, then $\{ \intt_Y(f(B)) : B\in \mathcal B\}$ is a
$\pi$-base for $Y$. Conversely, if $\mathcal B$ is  a $\pi$-base for
$Y$,
 then the family $\{ X \setminus f^{-1}(Y\setminus B) : B\in \mathcal
 B\}$ is a $\pi$-base for $X$.  If $D$ is a dense subset of $Y$,
 then $f^{-1}(D)$ is a dense subset of $X$. Fix a family
  $\{ A_\alpha : \alpha < \delta_k(X)\}$ of compact subsets of
 $f^{-1}(D)$ whose union is dense in $X$. It follows that
  $\{ f(A_\alpha) : \alpha< \delta_k(X)\}$ is a family of compact subsets
 of $D$ whose union is dense in $Y$. This proves that $\delta_k(Y)\leq
 \delta_k(X)$. 
\end{proof}

\begin{proposition} The following are equivalent\label{quasi}
 for a continuous map
  from a space $X$  onto $Y$:
  \begin{enumerate}
    \item for each open $U\subset X$, $\intt_Y(f(U))$ is dense in
      $f(U)$,
        \item $f$ is quasi-open,
  \item the pre-image of every dense set is dense
     \end{enumerate}
\end{proposition}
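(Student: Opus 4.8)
The plan is to run the short cycle $(1)\Rightarrow(2)$, $(2)\Rightarrow(1)$, $(2)\Rightarrow(3)$, $(3)\Rightarrow(2)$, treating quasi-openness as the hub and routing the other two conditions through it. The implication $(1)\Rightarrow(2)$ is immediate: for a non-empty open $U\subset X$ the set $f(U)$ is non-empty, and a subset that is dense in a non-empty set is itself non-empty, so $\intt_Y(f(U))\neq\emptyset$.

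For $(2)\Rightarrow(1)$, I would fix an open $U\subset X$ and a point $y\in f(U)$ and show $y\in\overline{\intt_Y(f(U))}$. Given an open neighborhood $V$ of $y$ in $Y$, the set $f^{-1}(V)\cap U$ is open and non-empty, since it contains any preimage of $y$ lying in $U$; quasi-openness then makes $W:=\intt_Y\bigl(f(f^{-1}(V)\cap U)\bigr)$ a non-empty open subset of $Y$. Because $f(f^{-1}(V)\cap U)\subseteq V\cap f(U)$, we get $W\subseteq V\cap\intt_Y(f(U))$, so this intersection is non-empty; as $V$ was arbitrary, $y$ lies in the closure of $\intt_Y(f(U))$, which is exactly $(1)$.

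For $(2)\Rightarrow(3)$: if $D$ is dense in $Y$ and $U$ is a non-empty open subset of $X$, then $\intt_Y(f(U))$ is non-empty open, hence meets $D$; any point of $D\cap\intt_Y(f(U))\subseteq D\cap f(U)$ has a preimage inside $U$, so $f^{-1}(D)\cap U\neq\emptyset$ and $f^{-1}(D)$ is dense. For the converse $(3)\Rightarrow(2)$ I would argue by contraposition: if $f$ is not quasi-open, pick a non-empty open $U\subset X$ with $\intt_Y(f(U))=\emptyset$; then $Y\setminus f(U)$ is dense in $Y$, while $f^{-1}(Y\setminus f(U))=X\setminus f^{-1}(f(U))\subseteq X\setminus U$ is contained in a proper closed set and so is not dense, contradicting $(3)$.

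None of these steps is a genuine obstacle; the only place that needs care is $(2)\Rightarrow(1)$, where one must pull the target neighborhood $V$ back through $f$ \emph{before} invoking quasi-openness, so that the witnessing open set produced lands inside $V\cap f(U)$ rather than merely inside $f(U)$. The standing regularity assumption is not used.
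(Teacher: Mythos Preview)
Your proof is correct and uses the same underlying ideas as the paper. The only difference is organizational: the paper runs the cycle $(1)\Rightarrow(2)\Rightarrow(3)\Rightarrow(1)$, dismissing the first two implications as trivial and proving $(3)\Rightarrow(1)$ directly (with an argument matching your $(3)\Rightarrow(2)$ and $(2)\Rightarrow(1)$ combined), whereas you route everything through $(2)$ via four implications.
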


\begin{proof}
The proofs of
 the implications (1) implies (2) and (2) implies (3) are trivial.
 Assume that (3) holds and let $U$ be a non-empty open subset of $X$
 and that $W\cap f(U)$ is not empty for some open $W\subset Y$.
 Let $U_1 = U\cap f^{-1}(W)$. Since $f(U_1)= W\cap f(U)$,
 showing that $f(U_1)$ has non-empty interior will prove
 that (1) holds.  Since $f^{-1}(Y\setminus f(U_1))$ is not
 dense, it follows from (3) 
that $Y\setminus f(U_1)$ is not dense. This implies that $f(U_1)$
has non-empty interior.
\end{proof}

\begin{corollary} If $Y$ is the continuous\label{quasid}
 quasi-open image of a space
  $X$, then $\delta_k(Y)\leq \delta_k(X)$.
\end{corollary}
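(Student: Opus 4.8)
The plan is to mimic the final paragraph of the proof of Proposition~\ref{irred}, replacing the use of irreducibility by the characterization of quasi-open maps supplied by Proposition~\ref{quasi}. Let $f$ be a continuous quasi-open map from $X$ onto $Y$. Since $\delta_k(Y)$ is the supremum of $d_k(D)$ over all dense $D\subseteq Y$, it suffices to fix one such $D$ and show that $d_k(D)\le\delta_k(X)$.

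First, because $f$ is quasi-open, Proposition~\ref{quasi} (the implication (2)$\Rightarrow$(3)) gives that $f^{-1}(D)$ is dense in $X$. By the definition of $\delta_k(X)$ applied to this dense set, $d_k\bigl(f^{-1}(D)\bigr)\le\delta_k(X)$, so we may fix a dense subset $Z\subseteq f^{-1}(D)$ together with compact sets $\{A_\alpha:\alpha<\delta_k(X)\}$ whose union is $Z$. Now set $E=f(Z)=\bigcup_{\alpha<\delta_k(X)}f(A_\alpha)$. Each $f(A_\alpha)$ is compact as the continuous image of a compact set, $E\subseteq D$ since $Z\subseteq f^{-1}(D)$, and $E$ is dense in $Y$ since $Z$ is dense in $X$ and $f$ is continuous and onto. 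As $D$ is itself dense in $Y$, a subset of $D$ is dense in $D$ exactly when it is dense in $Y$, so $E$ is a dense subset of $D$ that is the union of $\delta_k(X)$ many compact sets, witnessing $\cov_k(E)\le\delta_k(X)$ and hence $d_k(D)\le\delta_k(X)$.

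There is no genuine obstacle here; the entire content has been isolated in Proposition~\ref{quasi}, and the only point requiring a moment's care is the routine observation that ``dense in $D$'' and ``dense in $Y$'' agree for subsets of the dense set $D$, so that $E$ legitimately bounds $d_k(D)$.
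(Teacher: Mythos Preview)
Your proof is correct and is precisely the argument the paper intends: the corollary is stated without proof immediately after Proposition~\ref{quasi}, and the intended derivation is exactly to repeat the last paragraph of the proof of Proposition~\ref{irred} while invoking (2)$\Rightarrow$(3) of Proposition~\ref{quasi} in place of irreducibility to get that $f^{-1}(D)$ is dense. Your added remark that density in $D$ and density in $Y$ coincide for subsets of $D$ is the only point the paper leaves implicit, and it is routine.
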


For a space $X$, the celluarity $c(X)$ is the supremum of the
cardinalities of cellular (pairwise disjoint) families of open subsets
of $X$.
The first inequality in
this next result follows easily from the fact
that a compact subset of the union of a pairwise disjoint family of
open sets will be contained a union of finitely many of them. 
The second is a consequence of the fact that $\delta(X)\leq \pi(X)$.

\begin{proposition} For all $X$,\label{ccc} 
  $c(X) \leq \delta_k(X) \leq \pi(X)$. 
\end{proposition}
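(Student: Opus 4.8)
The plan is to establish the two inequalities by rather different means: $\delta_k(X)\le\pi(X)$ will follow immediately from the already available bound $\delta(X)\le\pi(X)$, whereas $c(X)\le\delta_k(X)$ is a short compactness argument built on the quoted fact that a compact subset of the union of a pairwise disjoint family of open sets lies inside the union of finitely many of them.

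For the second inequality I would first observe that $d_k(Y)\le d(Y)$ for \emph{every} space $Y$: picking a dense $Z\subseteq Y$ with $|Z|=d(Y)$, the family of singletons $\{\{z\}:z\in Z\}$ is a cover of $Z$ by $d(Y)$ many compact sets, so $\cov_k(Z)\le d(Y)$ and hence $d_k(Y)\le d(Y)$. Applying this to each dense subset $Y$ of $X$ gives $\delta_k(X)=\sup_Y d_k(Y)\le\sup_Y d(Y)=\delta(X)$, and then $\delta(X)\le\pi(X)$ finishes it.

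For $c(X)\le\delta_k(X)$ I would take an arbitrary cellular family in $X$ and, by Zorn's Lemma, enlarge it to a maximal cellular family $\{U_i:i\in I\}$; it then suffices to bound $|I|$ by $\delta_k(X)$. Maximality makes $G=\bigcup_{i\in I}U_i$ dense (and open) in $X$, so $d_k(G)\le\delta_k(X)$, and I can fix $Z\subseteq G$ that is dense in $G$ with $\cov_k(Z)=d_k(G)$, say $Z=\bigcup_{\alpha<\cov_k(Z)}K_\alpha$ with each $K_\alpha$ compact. Now the quoted fact applies to each $K_\alpha$, a compact subset of the pairwise disjoint open family $\{U_i:i\in I\}$, so $F_\alpha=\{i\in I:K_\alpha\cap U_i\ne\emptyset\}$ is finite. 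Since $G$ is dense in $X$ and $Z$ is dense in $G$, $Z$ is dense in $X$, hence $Z$ meets every $U_i$; therefore $I=\bigcup_{\alpha<\cov_k(Z)}F_\alpha$ is a union of $\cov_k(Z)$ many finite sets, so $|I|\le\cov_k(Z)+\aleph_0=d_k(G)+\aleph_0\le\delta_k(X)$. Taking the supremum over all cellular families yields $c(X)\le\delta_k(X)$.

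The argument is essentially routine; the only points needing care are the passage from a dense subset of the open set $G$ to a dense subset of $X$ (using that closure in the subspace $G$ is the trace of the closure in $X$) and the bookkeeping when $\cov_k(Z)$ happens to be finite, which is absorbed by the convention that $c$, $d_k$, $\delta_k$ and $\pi$ are all at least $\aleph_0$. The essential step — and the only place compactness genuinely enters — is the invocation of the quoted fact to see that each $K_\alpha$ meets only finitely many $U_i$.
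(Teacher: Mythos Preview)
Your proof is correct and fleshes out exactly the two hints the paper gives in lieu of a formal proof: the inequality $\delta_k(X)\le\pi(X)$ via $\delta_k(X)\le\delta(X)\le\pi(X)$, and $c(X)\le\delta_k(X)$ via the observation that a compact set contained in a union of pairwise disjoint open sets meets only finitely many of them. The only addition is your (appropriate) care in passing to a \emph{maximal} cellular family so that its union $G$ is dense, ensuring the compact pieces of the witnessing $Z\subseteq G$ actually lie inside $\bigcup_i U_i$.
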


Similarly, this next statement is easily proven but will be useful.

\begin{proposition} For all $X$\label{regclosed} and
  all regular closed   $U\subset X$, $c(U)\leq c(X)$,
   $\delta_k(U)\leq \delta_k(X)$, and $\pi(U)\leq \pi(X)$.
\end{proposition}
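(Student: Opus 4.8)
The plan is to prove all three inequalities simultaneously by exhibiting, for a regular closed $U \subseteq X$, a canonical way to transfer witnesses between $U$ and $X$ through the natural ``restriction'' operation on open sets. The key observation is that if $V$ is a non-empty open subset of $U$ (in the subspace topology), then $\intt_X(V) \neq \emptyset$: indeed $V = W \cap U$ for some open $W \subseteq X$, and since $U = \operatorname{cl}_X(\intt_X(U))$, the open set $W \cap \intt_X(U)$ is non-empty and contained in $U$, hence contained in $V$. Conversely, any non-empty open $W \subseteq X$ with $W \cap \intt_X(U) \neq \emptyset$ yields the non-empty relatively open set $W \cap U \subseteq U$. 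So there is a correspondence between the non-empty open subsets of $U$ and the non-empty open subsets of $X$ that meet $\intt_X(U)$, and this correspondence respects inclusion and disjointness.

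For $c(U) \le c(X)$: given a pairwise disjoint family $\{V_i\}$ of non-empty open subsets of $U$, replace each $V_i$ by $\intt_X(V_i)$; by the observation these are non-empty, they remain pairwise disjoint in $X$ (disjoint subsets of $U$ stay disjoint in $X$), and the family has the same cardinality, so $c(U) \le c(X)$.

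For $\pi(U) \le \pi(X)$: take a $\pi$-base $\mathcal B$ for $X$ of size $\pi(X)$. I claim $\{B \cap U : B \in \mathcal B,\ B \subseteq \intt_X(U)\}$ is a $\pi$-base for $U$. Given non-empty relatively open $V \subseteq U$, the set $\intt_X(V)$ is non-empty and open in $X$ and contained in $\intt_X(U) \subseteq U$, so some $B \in \mathcal B$ satisfies $B \subseteq \intt_X(V)$; then $B \subseteq \intt_X(U)$, so $B \cap U = B$ is in our family, and $B \subseteq V$. Hence $\pi(U) \le |\mathcal B| = \pi(X)$.

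For $\delta_k(U) \le \delta_k(X)$: let $D$ be a dense subset of $U$; we must cover a dense subset of $D$ by at most $\delta_k(X)$ compact sets, using that we are free to choose $D$ — or rather, the definition of $\delta_k$ requires us to handle every dense $D \subseteq U$. The natural move is to note that $D$ is also dense in the regular closed set $U$, and that $D \cup (X \setminus \intt_X(U))$ — more carefully, a dense subset of $X$ obtained by adjoining to $D$ a dense subset $E$ of the open set $X \setminus U$ — is dense in $X$; then $d_k$ of that set is at most $\delta_k(X)$, so it is covered by $\delta_k(X)$ compact sets, and the compact sets meeting $D$ restrict (intersect with $U$) to compact sets whose union is dense in $U$. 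The slight subtlety is that $D' = D \cup E$ need not equal a set whose $\cov_k$ is controlled; but $D'$ is a dense subset of $X$, so $d_k(D') \le \delta_k(X)$ by definition of $\delta_k$, and any compact cover of a dense subset of $D'$ splits along the partition $U \,\dot\cup\, (X \setminus U)$, with the $U$-part furnishing the required cover of a dense subset of $D$ after intersecting with the compact (hence closed) set $U$. The main obstacle, such as it is, is bookkeeping: making sure the dense-in-$D'$ subset that gets covered still meets every relatively open subset of $U$, which follows because relatively open subsets of $U$ have non-empty $X$-interior by the opening observation.
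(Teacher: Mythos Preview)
The paper does not actually prove this proposition; it simply asserts that ``this next statement is easily proven but will be useful.'' Your argument fills this gap and is correct.

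One small slip: in the $\delta_k$ part you refer to ``intersecting with the compact (hence closed) set $U$.'' The proposition does not assume $X$ is compact, so $U$ need not be compact --- but it \emph{is} closed (being regular closed), and that is all you actually use: each $K_\alpha \cap U$ is a closed subset of the compact set $K_\alpha$, hence compact. With that wording corrected, everything goes through: $D' = D \cup E$ is dense in $X$ (the case $U = X$ being trivial), the $\delta_k(X)$-many compact sets witnessing $d_k(D') \le \delta_k(X)$ intersect $U$ in compact subsets of $D' \cap U = D$, and your opening observation guarantees that their union meets every non-empty relatively open subset of $U$, hence is dense in $D$.
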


Say that a compact space $X$  has uniform $\pi$-weight
 if every non-empty
regular closed subset has  $\pi$-weight equal to $\pi(X)$. 

\begin{proposition} Each compact space  $X$\label{uniform},
with   $\delta_k(X) < \pi(X)$, has  a regular closed subset
  $Y$ having uniform
  $\pi$-weight and satisfying that $\delta_k(Y) < \pi(Y)$.
\end{proposition}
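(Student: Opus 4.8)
The plan is to let $\mu$ be the least cardinal for which some non-empty regular closed $W\subseteq X$ has $\pi(W)=\mu$ and $\delta_k(W)<\mu$; such a $\mu\le\pi(X)$ exists since, by hypothesis, $X$ itself is a witness. I fix a witness $Y$, so $\pi(Y)=\mu$ and $\delta_k(Y)<\mu$. Since $\intt_X Y$ is dense in $Y$ and the closure of an open set is regular closed, every set of the form $\overline U$ with $U$ open in $Y$ is regular closed in $X$; hence, by the minimality of $\mu$ together with Proposition \ref{ccc}, $\delta_k(\overline U)=\pi(\overline U)$ whenever $\pi(\overline U)<\mu$. I will produce a regular closed subset of $Y$ with uniform $\pi$-weight $\mu$ and with $\delta_k$ below $\mu$, which by Proposition \ref{regclosed} is exactly what is wanted.

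The first move is a dichotomy: either (B) there is a non-empty regular closed $Y'\subseteq Y$ all of whose non-empty regular closed subsets have $\pi$-weight $\ge\mu$; or (A) every non-empty open subset of $Y$ contains a non-empty regular closed set of $\pi$-weight $<\mu$ (these are complementary, using the regularity of $Y$). In case (B), Proposition \ref{regclosed} gives $\pi(Y')\le\pi(Y)=\mu$, so $Y'$ has uniform $\pi$-weight $\mu$, while $\delta_k(Y')\le\delta_k(Y)<\mu=\pi(Y')$; thus $Y'$ is the desired regular closed subset and we are finished. The real content is to rule out (A).

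So suppose (A) holds, and fix a maximal pairwise disjoint family $\{U_\xi:\xi\in I\}$ of non-empty open subsets of $Y$ with $\pi(\overline{U_\xi})<\mu$ for all $\xi$. By (A) and maximality, $\bigcup_\xi U_\xi$ is dense in $Y$; by Propositions \ref{ccc} and \ref{regclosed}, $|I|\le c(Y)\le\delta_k(Y)<\mu$; and, writing $\mu_\xi=\pi(\overline{U_\xi})<\mu$, the minimality of $\mu$ forces $\delta_k(\overline{U_\xi})=\mu_\xi$ for every $\xi$. Now one of two things happens. If $\sup_\xi\mu_\xi<\mu$, then gluing $\pi$-bases of the $\overline{U_\xi}$ along the dense open set $\bigcup_\xi U_\xi$ yields a $\pi$-base for $Y$ of size at most $\sum_\xi\mu_\xi\le|I|\cdot\sup_\xi\mu_\xi<\mu$, contradicting $\pi(Y)=\mu$. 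If instead $\sup_\xi\mu_\xi=\mu$ (so $\mu$ is singular with $\cf\mu\le|I|$), I choose cardinals $\nu_\xi<\mu_\xi$ with $\sup_\xi\nu_\xi=\mu$, and for each $\xi$ use $\delta_k(\overline{U_\xi})=\mu_\xi$ to fix a dense subset $D_\xi\subseteq U_\xi$ of $\overline{U_\xi}$ every dense subset of which has compact covering number greater than $\nu_\xi$. Then $D=\bigcup_\xi D_\xi$ is dense in $Y$, and I claim $d_k(D)\ge\mu$, contradicting $\delta_k(Y)<\mu$. Indeed, given a dense $E\subseteq D$, write $E=\bigcup_{\alpha<\tau}K_\alpha$ with each $K_\alpha$ compact and $\tau=\cov_k(E)$. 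As $\{U_\xi\}$ covers $E$ by disjoint open sets, each $K_\alpha$ meets only finitely many $U_\xi$, and for those $\xi$ one has $K_\alpha\cap U_\xi=K_\alpha\cap\overline{U_\xi}$ (since $\overline{U_\xi}$ is disjoint from every other $U_\zeta$), so $K_\alpha\cap U_\xi$ is compact. Hence each $E\cap U_\xi$, which is a dense subset of $D_\xi$ because $U_\xi$ is open, is a union of $\tau_\xi:=|\{\alpha:K_\alpha\cap U_\xi\ne\emptyset\}|$ compact sets, giving $\nu_\xi<\cov_k(E\cap U_\xi)\le\tau_\xi$; and double counting yields $\sum_\xi\tau_\xi=\sum_{\alpha<\tau}|\{\xi:K_\alpha\cap U_\xi\ne\emptyset\}|\le\tau\cdot\aleph_0=\tau$, so $\tau\ge\sum_\xi\tau_\xi\ge\sup_\xi\nu_\xi=\mu$.

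The ingredients I regard as routine are: the construction of a $\pi$-base for $Y$ by gluing $\pi$-bases over the dense open family of cells; the observation that the closure of a cell is disjoint from the other cells (so that a compact set meets each cell in a compact set); and the small amount of cardinal arithmetic needed to choose the $\nu_\xi$ with supremum $\mu$ in the singular case. The step I expect to carry the weight is the final inequality $d_k(D)\ge\mu$: this is precisely where $d_k$ must be distinguished from ordinary density — an arbitrary dense set may be spread across all the cells simultaneously, but a \emph{compact} set cannot, so the local values $\delta_k(\overline{U_\xi})=\mu_\xi$ genuinely accumulate rather than merely dominating one another.
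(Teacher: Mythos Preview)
Your argument is correct but takes a longer and genuinely different route from the paper's. The paper never introduces your minimal-$\mu$ setup: instead it observes directly that inside any regular closed set one can pass to a regular closed subset of \emph{minimal} $\pi$-weight, and any such subset automatically has uniform $\pi$-weight. Taking a maximal cellular family $\mathcal B$ of such uniform pieces inside $X$, one has $|\mathcal B|\le c(X)\le\delta_k(X)$ by Proposition~\ref{ccc}, and gluing their $\pi$-bases gives a $\pi$-base for $X$; hence some $Y\in\mathcal B$ must satisfy $\pi(Y)>\delta_k(X)\ge\delta_k(Y)$. That is essentially the whole proof.

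Your approach instead leverages minimality of $\mu$ to force $\delta_k=\pi$ on every strictly smaller regular closed piece, turning case~(A) into a contradiction. The singular sub-case, where you assemble $D=\bigcup_\xi D_\xi$ and show $d_k(D)\ge\mu$ by exploiting that each compact set meets only finitely many cells, is a nice explicit illustration of how $\delta_k$ behaves under cellular decompositions --- but none of this machinery is needed for the proposition. What your argument buys is some structural insight into $\delta_k$; what the paper's buys is brevity and the avoidance of any cofinality case split. One point worth making explicit in your write-up: the step from $\delta_k(\overline{U_\xi})=\mu_\xi$ to a dense $D_\xi\subseteq U_\xi$ with $d_k(D_\xi)>\nu_\xi$ tacitly uses that $\delta_k$ is unchanged on passing to a dense open subset (any $E$ dense in $D\cap U_\xi$ is already dense in $D$, since $U_\xi$ is dense open in $\overline{U_\xi}$), which is true but is not the content of Proposition~\ref{regclosed}.
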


\begin{proof} 
  For each non-empty regular-closed  subset $U$ of $X$,
   let $\mathcal R_U$ denote the set of non-empty regular closed
   subsets of $\intt_X(U)$. For each $U\in \mathcal R_X$, 
 let $\delta_U$ be the minimum element of $\{ \pi(W) :  W\in \mathcal
 R_U\}$  and choose any $W_U\in \mathcal R_U$ with $\pi(W_U) =
 \delta_U$. Evidently $\pi(W) = \pi(W_U)$ for all regular closed
 subsets $W$ of $W_U$, that is $W_U$ has uniform $\pi$-weight and each
 element of $\mathcal R_{W_U}$ also has uniform $\pi$-weight. We may
 now choose a maximal family, $\mathcal B$,
of pairwise disjoint members of $\mathcal R_X$ that have uniform
$\pi$-weight. By Proposition \ref{ccc}, $|\mathcal B|\leq \delta_k(X)$.
 If $\mathcal B_U\subset \mathcal R_U$ is a $\pi$-base for
each $U\in \mathcal B$, then $\bigcup \{ \mathcal B_U : U\in \mathcal
B\}$ is a $\pi$-base for $X$. It follows that there is a $Y\in
\mathcal B$ such that $\pi(Y)>\delta_k(X)$.  By Proposition \ref{regclosed},
we have that $\delta_k(Y) \leq \delta_k(X) < \pi(Y)$ as required.
\end{proof}

This next result is a crucial step in the proof of our main result.
In this proof, and others, we will use the notion of elementary
submodels. Most readers will be familiar enough with these notions and
they are surveyed in \cite{elem88}. In particular, for a regular
 cardinal $\theta$, $H(\theta)$ denotes the set of all sets whose
 transitive closure has cardinality less than $\theta$. An elementary
submodel  $M$ (of cardinality less than $\theta$)
 of $H(\theta)$ 
can be thought of as an element of $H(\theta)$ with
 the property that anything true in $H(\theta)$ of finitely many
 elements of $M$ is also true in $M$. The classical
L\"owenheim-Skolem theorem ensures that every infinite element $x$
of $H(\theta)$ is 
a subset of an elementary submodel $M$ of
$H(\theta)$ with $|M| = |x|$. Tarski proved that the union of an
increasing chain of fewer than $\theta$ many  elementary submodels 
of $H(\theta)$
 is an elementary submodel of $H(\theta)$.

\begin{lemma} If $X$ is a compact space with $\delta_k(X) = \kappa <
  \pi(X)$, 
  then there is a quasi-open continuous\label{reflect}
 image $Y$ of $X$ with
 $\delta_k(Y)\leq \kappa < \pi(Y) \leq w(Y) \leq 2^\kappa$. 
\end{lemma}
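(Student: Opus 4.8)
The plan is to build $Y$ as a continuous image of $X$ under a map determined by a carefully chosen elementary submodel $M$ of some $H(\theta)$ with $\kappa \subseteq M$ and $|M| \le 2^\kappa$, taking the quotient of $X$ induced by the family of continuous real-valued functions on $X$ lying in $M$ (equivalently, the map $f_M \colon X \to \prod\{[0,1] : g \in M \cap C(X,[0,1])\}$ given by $x \mapsto (g(x))_g$, and letting $Y = f_M[X]$). Since $|M| \le 2^\kappa$, the space $Y$ has a subbase of size at most $2^\kappa$, hence $w(Y) \le 2^\kappa$, which gives the last inequality; and $\pi(Y) \le w(Y)$ is automatic. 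By Proposition~\ref{ccc}, $\delta_k(Y) \le \pi(Y) \le w(Y) \le 2^\kappa$, but to get $\delta_k(Y) \le \kappa$ we must work harder. The real content is therefore to arrange that (i) $f_M$ is quasi-open, so that Corollary~\ref{quasid} gives $\delta_k(Y) \le \delta_k(X) = \kappa$, and (ii) $\pi(Y) > \kappa$.

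**Getting $f_M$ quasi-open and $\delta_k(Y) \le \kappa$.** For quasi-openness I would invoke a standard fact about submodel quotients of compact spaces: if $M \prec H(\theta)$ contains (a name for) $X$ and is sufficiently closed, then the canonical map $f_M$ is quasi-open, because for a basic open $U \subseteq X$ coded in $M$ the set $X \setminus f_M^{-1}(f_M[X \setminus U])$ is a nonempty open set inside $U$ that is $M$-definable, so its image is nonempty open in $Y$; more carefully, one checks condition (3) of Proposition~\ref{quasi}, that preimages of dense sets are dense, using elementarity to reflect a dense set of $Y$ back through $M$. Actually cleanest is: $f_M$ is irreducible-like enough that it is quasi-open — here one uses that $X$ is compact so $f_M$ is closed, and a closed map is quasi-open iff its fibers are nowhere dense-avoiding in the appropriate sense; I would cite or reprove the lemma that submodel maps on compacta are quasi-open. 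Once quasi-openness is in hand, Corollary~\ref{quasid} immediately yields $\delta_k(Y) \le \delta_k(X) = \kappa$.

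**Getting $\pi(Y) > \kappa$ — the main obstacle.** This is the crux. The danger is that collapsing $X$ by an arbitrary submodel $M$ of size $2^\kappa$ might reduce the $\pi$-weight all the way down to $\kappa$ or below. To prevent this, I would first apply Proposition~\ref{uniform} to replace $X$ by a regular closed subspace of uniform $\pi$-weight still satisfying $\delta_k < \pi$; so without loss of generality every nonempty regular closed subset of $X$ has $\pi$-weight $\pi(X) > \kappa$. Now the point is to choose $M$ so that it is \emph{not} too rich: I want $M \cap C(X,[0,1])$ to fail to contain a $\pi$-base worth of functions. Concretely, I would build $M$ as an increasing union of a chain $\langle M_\alpha : \alpha < \kappa^+ \rangle$ (or of length $\operatorname{cf}(\pi(X))$) of elementary submodels each of size $2^\kappa$ with $\kappa + 1 \subseteq M_0$, and argue by a counting/reflection argument that $f_M[X] = Y$ still has $\pi(Y) > \kappa$: if $\pi(Y) \le \kappa$ then a $\pi$-base $\mathcal{B}$ for $Y$ of size $\le \kappa$ pulls back (via $f_M$, using the formula $X \setminus f_M^{-1}(Y \setminus B)$ from Proposition~\ref{irred}) to a $\pi$-base for $X$ of size $\le \kappa$, contradicting $\pi(X) > \kappa$ — \emph{provided} each of these $\kappa$-many open sets, and its code, can be captured inside $M$, which is exactly what the chain construction and $\kappa \subseteq M$ secure by a closing-off argument. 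So the inequality $\pi(Y) > \kappa$ follows from the fact that $f_M$ faithfully reflects $\pi$-bases together with $M$ being closed under $\kappa$-sequences of its elements of the relevant type. I expect the delicate point to be verifying that the pullback of a small $\pi$-base for $Y$ really does land inside $M$ (hence is genuinely a family in $X$, witnessing small $\pi$-weight of $X$): this needs the chain of submodels to be long enough and each initial segment to be an element of the next, so that any $\kappa$-sized subfamily of $M$-definable open sets is itself an element of $M$. Once that is arranged, combining $\kappa < \pi(Y) \le w(Y) \le 2^\kappa$ with $\delta_k(Y) \le \kappa$ finishes the proof.
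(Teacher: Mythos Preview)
Your overall framework---take a $\kappa$-closed elementary submodel $M$ of size $2^\kappa$ and let $Y$ be the induced quotient of $X$---matches the paper's, and the weight bound and the appeal to Corollary~\ref{quasid} are fine. But both of the substantive steps contain real gaps.

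\textbf{Quasi-openness.} The assertion that ``submodel maps on compacta are quasi-open'' is not a standard fact and is not true without further hypotheses; your sketched justifications do not go through. The paper's argument hinges on an ingredient you never invoke: by Proposition~\ref{ccc}, $c(X)\le\delta_k(X)=\kappa$, hence $c(Y)\le\kappa$. Given a dense open $D\subset Y$, one can therefore choose a family $\mathcal B$ of at most $\kappa$ basic open sets whose traces on $Y$ are pairwise disjoint and have union dense in $D$. Now $\mathcal C=\{\pr_M^{-1}(B):B\in\mathcal B\}$ is a $\kappa$-sized subset of $M$, hence (by $M^\kappa\subset M$) an \emph{element} of $M$. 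The statement ``every basic open set meeting $X$ meets some $C\cap X$ for $C\in\mathcal C$'' is true for basic opens in $M$ (these correspond exactly to basic opens of $Y$, and $\mathcal B$ is dense in $Y$), so by elementarity it is true for all basic opens; thus $\bigcup\mathcal C\cap X$ is dense in $X$ and $\pr_M^{-1}(D)$ is dense. Without the cellularity bound you cannot reduce to a $\kappa$-sized $\mathcal B$, and without that you cannot get $\mathcal C\in M$.

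\textbf{The bound $\pi(Y)>\kappa$.} Your pullback argument using the formula $X\setminus f_M^{-1}(Y\setminus B)$ from Proposition~\ref{irred} requires $f_M$ to be \emph{irreducible}, not merely quasi-open: that set equals $f_M^{-1}(B)$, and $\{f_M^{-1}(B):B\in\mathcal B\}$ is a $\pi$-base for $X$ only if $Y\setminus f_M(X\setminus U)\ne\emptyset$ for every nonempty open $U\subset X$, i.e.\ only under irreducibility, which $\pr_M\restriction X$ need not satisfy. The paper argues differently: given any family $\mathcal B$ of $\le\kappa$ basic open subsets of $[0,1]^{M\cap\theta}$, the family $\mathcal C=\{\pr_M^{-1}(B):B\in\mathcal B\}$ lies in $M$; since $\{C\cap X:C\in\mathcal C\}$ is not a $\pi$-base for $X$ (as $\pi(X)>\kappa$), elementarity produces a basic open witness in $M$, and this witness, having support in $M\cap\theta$, corresponds to a basic open set in $Y$ not refined by $\mathcal B$. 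So one shows directly that $\mathcal B$ fails to be a $\pi$-base for $Y$, rather than pulling $\mathcal B$ back to a $\pi$-base for $X$. Your detour through Proposition~\ref{uniform} and a $\kappa^+$-chain is unnecessary; a single $M$ with $M^\kappa\subset M$ and $|M|=2^\kappa$ suffices.
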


\begin{proof}
Let $\theta$ be the weight of $X$ and identify $X$ with a closed
subset of $[0,1]^\theta$. Let $M$ be an elementary submodel of
$H(\theta^+)$ such that $X\in M$, $M^\kappa\subset M$, and
$|M|=2^\kappa$. Now let $\pr_M$ denote the projection mapping
from $[0,1]^\theta$ onto $[0,1]^{M\cap \theta}$, and let 
$Y = \pr_M(X)$. Evidently $\pi(Y) \leq w(Y) \leq |M|$. If $\mathcal B$
is a family of at most $\kappa$ many basic open subsets of
$[0,1]^{M\cap \theta}$, then $\mathcal C =
\{ \pr_M^{-1}(B) : B\in \mathcal B\}$ is an element of $M$. Since
$\{ C\cap X: C\in \mathcal C\}$ is not a $\pi$-base for $X$,
it follows by simple elementarity that $\{ B\cap Y : B\in \mathcal
B\}$ is not a $\pi$-base for $Y$.

Now we prove that $\pr_M\restriction X$ is quasi-open.
Since $X$ is regular, it suffices to prove that the image of each
non-empty regular closed set has non-empty interior. Since
the image of such a set is also compact,
 it suffices to prove that the pre-image of
 each dense open subset of $Y$ is a dense subset of $X$.
 Assume that
$D$ is a dense open subset of $Y$.
By Proposition \ref{ccc}, the celluarity of $X$ is at most $\kappa$.
Since $Y$ is a continuous image of $X$, the cellularity of $Y$ is also
at most $\kappa$. 
Choose any family
 $\mathcal B$ of basic open subsets of $[0,1]^{M\cap \theta}$
satisfying that the members of of $\{ B\cap Y : B\in \mathcal B\}$ are
non-empty and pairwise disjoint subsets of $D$ and so that
the union of $\{ B\cap Y : B\in \mathcal B\}$ is a dense subset of
$D$. 
Since $c(Y)\leq \kappa$, the family $\mathcal B$ has cardinality
at most $\kappa$.
Now we have that the family $\mathcal C = 
\{ \pr_M^{-1}(B) : B\in \mathcal B\}$ is a subset of $M$ and,
since $M^\kappa\subset M$, $\mathcal C$ is an element of $M$. 
We observe that for every 
basic open subset $U$ of $[0,1]^{M\cap  \theta}$ that meets
 $Y$, there is a $B\in \mathcal B$ such that $U\cap B\cap Y$ is not
empty.  Equivalently, for every basic open subset $U\in M$
of $[0,1]^\theta$,  there is a $C\in
 \mathcal C$ such that $U\cap C\cap X$ is not empty.
Therefore, by elementarity, we have that
for every basic open subset $U$ of $[0,1]^\theta$,
there is a $C\in \mathcal C$ such that $U\cap C\cap X$ is not
empty. This implies that the union of the family
$\{ C\cap X : C\in \mathcal C\}$ is a dense subset of $X$.
Since $C\subset \pr_M^{-1}(D)$ for all $C\in \mathcal C$, this
completes
the proof.  
\end{proof}

In the investigation of $t_k^*(X)$ in \cite{DowMoore}, it was fruitful
to consider left-separated sequences of compact nowhere dense 
 $G_\delta$ sets. A set is nowhere dense if its closure has empty
 interior.  We
also find that mixing the notion of left-separated sets with coverings
by compact nowhere dense $G_\delta$ sets will be useful. 
A left-separated transfinite
sequence in a space $X$ is a sequence $\{ x_\alpha : \alpha <\mu\}$
indexed by an ordinal $\mu$
that has the property that
 $x_\alpha$ is not in the closure of $\{ x_\beta : \beta < \alpha\}$
for all $\alpha\in \mu$.

\begin{lemma} Assume that $\{ x_\alpha : \alpha < \mu\}$ is a
  left-separated subset of a space $X$, and for each $\alpha< \mu$,
  there is a\label{leftsep}
 compact $G_\delta$ subset $Z_\alpha$ of $X$ such
  that $x_\alpha\in Z_\alpha$ and $x_\beta\notin Z_\alpha$ for all
  $\alpha\neq \beta < \mu$. Then each compact subset  $K$ of
   $\{ x_\alpha : \alpha < \mu\}$ is scattered and countable.
\end{lemma}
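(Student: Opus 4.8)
The plan is to prove the two conclusions separately, and to do so by isolating from the hypotheses exactly two features of an arbitrary compact $K\subseteq\{x_\alpha:\alpha<\mu\}$. First, $K$ is left-separated: a subspace of a left-separated space is left-separated, with the inherited well-ordering. Second, every singleton of $K$ is a $G_\delta$ of $K$: given $x_\alpha\in K$, since $Z_\alpha$ is a $G_\delta$ of $X$ that meets $\{x_\beta:\beta<\mu\}$ only in $x_\alpha$, and every point of $K$ is some $x_\beta$, we get $Z_\alpha\cap K=\{x_\alpha\}$; because $K$ is compact Hausdorff this makes $K$ first countable.

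Next I would show $K$ is scattered. Suppose not. Then the decreasing sequence of Cantor--Bendixson derivatives $K^{(\xi)}$ never reaches $\emptyset$, so it stabilizes at a nonempty closed (hence compact) subspace $C$ with no isolated points. A nonempty compact Hausdorff space without isolated points contains a copy of $2^\omega$: recursively choose nonempty open sets $V_\sigma$, indexed by finite $0$--$1$ sequences $\sigma$, so that $\overline{V_{\sigma 0}}$ and $\overline{V_{\sigma 1}}$ are disjoint subsets of $V_\sigma$ (possible because every nonempty open subspace of $C$ is infinite and $C$ is regular), pick $p_x\in\bigcap_n\overline{V_{x\restriction n}}$ for each $x\in 2^\omega$ (nonempty by compactness), and verify that $x\mapsto p_x$ is a continuous injection of the compact space $2^\omega$, hence a homeomorphism onto its image. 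But $2^\omega$ is not left-separated --- being second countable it is hereditarily separable, and a hereditarily separable space has no uncountable left-separated subspace --- whereas left-separatedness is hereditary, so $K$ contains no copy of $2^\omega$. This contradiction shows $K$ is scattered.

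It then remains to prove that a compact Hausdorff, first countable, scattered space $K$ is countable, which I would do by transfinite induction on $\rk(K)$, the ordinal $\eta$ with $K^{(\eta)}\ne\emptyset=K^{(\eta+1)}$. Compactness rules out $K^{(\lambda)}=\emptyset$ at a limit $\lambda$ with all earlier derivatives nonempty, so the sequence does terminate at a successor, and $K^{(\eta)}$ is discrete and closed in the compact $K$, hence finite, say $K^{(\eta)}=\{p_1,\dots,p_k\}$. Using regularity and first countability, choose for each $i$ a decreasing base $\{V^i_n:n<\omega\}$ of closed neighbourhoods of $p_i$ with the $V^i_0$ pairwise disjoint and $V^i_0\cap K^{(\eta)}=\{p_i\}$; then $\bigcap_n V^i_n=\{p_i\}$. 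The set $K\setminus\bigcup_i\intt_K(V^i_0)$ is closed in $K$ and disjoint from $K^{(\eta)}$, so --- using that $Y^{(\xi)}\subseteq K^{(\xi)}$ for every closed $Y\subseteq K$ --- it has rank $<\eta$ and is countable by the inductive hypothesis. For each $i$, $V^i_0\setminus\{p_i\}=\bigcup_n(V^i_0\setminus V^i_n)$, and for each $n$ the closure $\overline{V^i_0\setminus V^i_n}$ is contained in $V^i_0$ yet avoids $p_i$ (since $V^i_n$ is a neighbourhood of $p_i$ disjoint from $V^i_0\setminus V^i_n$) and the other $p_j$; so it too has rank $<\eta$ and is countable, whence $V^i_0$ is countable. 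Therefore $K=\bigl(K\setminus\bigcup_i\intt_K V^i_0\bigr)\cup\bigcup_i V^i_0$ is countable. (Countability then also re-delivers scatteredness, but scatteredness was needed, via left-separatedness, to launch the induction.)

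The step I expect to be the real work is this last one, the Cantor--Bendixson induction. The delicate points are that compactness forces the top derivative $K^{(\eta)}$ to be finite --- so only a finite cover must be controlled --- and that a closed subspace of $K$ missing $K^{(\eta)}$ genuinely drops in Cantor--Bendixson rank, which is precisely what makes the induction close; the decomposition of each ``bad'' neighbourhood $V^i_0$ of a top point into countably many closed pieces of strictly smaller rank is the technical heart of the argument.
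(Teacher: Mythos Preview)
Your scatteredness argument has a genuine gap. The assertion ``a nonempty compact Hausdorff space without isolated points contains a copy of $2^\omega$'' is false in general: $\beta\omega\setminus\omega$ is a counterexample, since it contains no nontrivial convergent sequences and hence no copy of $2^\omega$. Your tree construction does produce an injection $x\mapsto p_x$ from $2^\omega$ into $C$, but continuity is not automatic---for that you would need the sets $\overline{V_{x\restriction n}}$ to form a neighbourhood base at $p_x$ (equivalently $\bigcap_n\overline{V_{x\restriction n}}=\{p_x\}$), and nothing in your construction forces this. The claim does become true if one adds first countability, which $C$ inherits from $K$, and you did establish first countability of $K$ at the outset; but you never invoke it in this step, and arranging the Cantor scheme so that every branch shrinks to a singleton takes real additional bookkeeping that your sketch does not supply.

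The paper sidesteps this issue entirely by using a surjection rather than an embedding: any compact Hausdorff space with no isolated points maps continuously \emph{onto} $[0,1]$, and then one exploits the left-separated ordering directly via a minimal-ordinal argument (take the least $\alpha$ for which the image of $K\cap\{x_\beta:\beta<\alpha\}$ has a crowded subset, observe $\cf(\alpha)=\omega$ because crowded subsets of $[0,1]$ have countable crowded subsets, and reach a cardinality contradiction using that initial segments are closed). This is the Gerlits--Juh\'asz argument, and it needs no first countability for the scatteredness half.

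Your countability argument via transfinite induction on Cantor--Bendixson rank is correct and is a legitimate alternative to the paper's route, which instead chooses for each point a clopen witness $W_x$ to its scattering level, covers $K$ by finitely many such $W_x$, and looks at a point of minimal rank among those with $|W_x|>\aleph_0$ to derive a contradiction with $\sigma$-compactness.
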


\begin{proof}
This lemma is really two
separate results combined into one.  Namely, the first is
 that a compact subset of a left-separated sequence is scattered
  (\cite{JuhGerlits}),  and the second is
that a compact first-countable scattered space is countable. We prove
each statement. First assume that $K$ is a compact subset of the
left-separated sequence $\{ x_\alpha : \alpha < \mu\}$. Since a
 space is scattered if every one of its closed subspaces has an
 isolated point, we may assume that $K$ has no isolated point and
 obtain a contradiction. It is well-known that such a space $K$ has a
 mapping, $f$, onto
 $[0,1]$.  To see this directly, assume that $K$ is
 embedded into $[0,1]^\theta$ where $\theta$ is the weight of
 $K$. Choose any countable elementary submodel $M$ of $H(\theta^+)$
 such
 that $K\in M$.  Now $\pr_M(K)$ is a compact metrizable space which,
by elementarity, has no isolated points. If $\pr_M(K)$ is totally
disconnected, it is a copy of the Cantor space  and otherwise
$\pr_M(K)$ has a non-trivial connected 
component. Any continuous real-valued function on $\pr_M(K)$ that 
is not constant on such a component, will include an interval in its
range. Therefore there is a mapping of
$\pr_M(K)$  onto $[0,1]$. In either case,
 $\pr_M(K)$ maps onto $[0,1]$, and therefore,  so does $K$. Now choose
$\alpha<\mu$ minimal so that $f(K\cap \{x_\beta : \beta < \alpha\})$
has a crowded subset $S$. Each crowded subset of $[0,1]$ has   a countable
crowded subset, and so the cofinality of $\alpha$ is countable. Since
$\{ x_\beta : \beta <\mu \}$ is left-separated, it follows that
 $K\cap \{ x_\beta : \beta < \alpha\}$ maps onto the (compact) closure
$\overline{S}$ 
of $S$. However $\overline{S}$ is uncountable,
while 
$ f(K\cap \{x_\beta : \beta < \gamma\})$  is countable for all
$\gamma<\alpha$. This contradiction proves that $K$ is scattered.

Now with $K$ any compact scattered subset of 
$\{ x_\alpha : \beta<\mu\}$, we have that $K$ is first-countable
since the family $\{ Z_\alpha \cap K : \alpha < \mu\}$ witnesses that
each point of $K$ is a relative $G_\delta$. Since $K$ is compact it is
first-countable. For each $x\in K$, let $\rho_K(x)$ denote the
scattering level of $K$ that contains $x$. For each $x\in K$,
there is a relatively clopen subset $W_x$ of $K$ satisfying that
 $\rho_K(y) < \rho_K(x)$ for all $x\neq y\in W_x$; that is, $W_x$ is a
relatively clopen
 set witnessing that $x$ is an isolated point of
$K\setminus \{ y\in K : \rho_K(y)<\rho_K(x)\}$.  Since $K$ is compact,
 a finite subcollection of $\{ W_x : x\in K\}$ will cover $K$. If $K$
 is uncountable, 
 then $\{ \rho_K(x) : x\in K\ \mbox{and} \ |W_x|>\aleph_0\}$ has a
 minimum element $\delta$. Choose $x\in K$ such that $\rho_K(x)
 =\delta$
 and $|W_x| > \aleph_0$. Note that $W_y$ is countable for all $y\in
 W_x$.  We now have a contradiction because
 $W_x\setminus \{ x\} $ is $\sigma$-compact and yet
  the family $\{ W_y  : y\in W_x\setminus \{x\}\}$ is an open cover
  with no countable subcover. 
\end{proof}

\section{On densely $k$-separable
  compact spaces}

This next result is similar to  an old result of
Malychin (\cite[3.17]{10years}) showing that an
uncountable compact $T_1$-space has cardinality
at least $\mathfrak c$ if  each point  of the space
is  a $G_\delta$.   In particular, if one constructed
 the family $\mathcal Z$ in the statement of
the Lemma to be  upper semi-continuous 
then the conclusion of the Lemma would follow
from   Malychin's result. We instead prove the Lemma
directly since the construction of the family
 $\mathcal Z$ is then more natural and simpler.

\begin{lemma} Let $X$ be a compact ccc space with\label{tree}
no isolated points,  then there is a
  partition $\mathcal  Z$ of $X$ consisting of nowhere dense
compact $G_\delta$'s
  and satisfying that for all non-empty
  regular closed subsets $U$  of $X$, the set
   $\{ Z\in \mathcal Z : U\cap Z\neq \emptyset\}$ has cardinality
  $\mathfrak c$. 
\end{lemma}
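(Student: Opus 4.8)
The plan is to build the partition $\mathcal{Z}$ by a recursion of length $\mathfrak{c}$, at each step extracting one more nowhere dense compact $G_\delta$ set, while maintaining a ``tree'' of regular closed sets that records which parts of $X$ have already been used. More precisely, I would construct a family $\{U_s : s \in 2^{<\omega}\}$ of non-empty regular closed subsets of $X$ such that $U_{s^\frown i} \subset \intt_X(U_s)$ for $i \in \{0,1\}$, the sets $U_{s^\frown 0}$ and $U_{s^\frown 1}$ are disjoint, and (crucially) each $U_s$ has empty interior intersection shrinking to diameter-like smallness along branches — i.e.\ for every branch $b \in 2^\omega$, $\bigcap_n U_{b\restriction n}$ is a single point or at least a nowhere dense compact $G_\delta$. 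Since $X$ is ccc, such a dyadic tree of regular closed sets exists below any non-empty regular closed set: given $U_s$ with non-empty interior, $\intt_X(U_s)$ is not a single point (as $X$ has no isolated points), so it contains two disjoint non-empty regular closed subsets.

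The key point for the cardinality requirement is that I want, for each non-empty regular closed $U$, the collection of branches $b$ with $U_{b\restriction n} \cap U \neq \emptyset$ for all $n$ to have size $\mathfrak{c}$. To arrange this, I would not use a single tree but rather, below \emph{every} basic regular closed set from a $\pi$-base — or more cleanly, I would do the following: first note it suffices (after the tree is built) that the map $b \mapsto \bigcap_n U_{b\restriction n}$ has range meeting every non-empty regular closed set in $\mathfrak{c}$ points, and then take $\mathcal{Z}$ to be this range together with a partition of the leftover set $X \setminus \bigcup_b \bigcap_n U_{b\restriction n}$ into further nowhere dense compact $G_\delta$'s (or absorb the leftover by making the tree cover $X$). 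The cleanest route: recursively thin so that at level $n$ the sets $\{U_s : s \in 2^n\}$ have closure of their union equal to $X$ (possible by ccc and a maximality/Zorn argument at each level, since a maximal disjoint refinement of regular closed sets inside a dense open set is countable hence fits in $2^n$ after padding), so that $\bigcup_b \bigcap_n U_{b\restriction n}$ is automatically dense; then every non-empty regular closed $U$ contains some $U_s$, and below $U_s$ the full binary subtree gives $\mathfrak{c}$ distinct branches all meeting $U$, hence $\mathfrak{c}$ distinct members of $\mathcal{Z}$.

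The main obstacle I anticipate is two-fold and interlinked: (i) ensuring the branch intersections $\bigcap_n U_{b\restriction n}$ are genuinely \emph{nowhere dense} (not just compact $G_\delta$) and that \emph{distinct} branches give \emph{distinct} — in fact disjoint — members, so that $\mathcal{Z}$ is honestly a partition; and (ii) simultaneously covering all of $X$, since a naive dyadic tree will leave the ``boundary'' points between the two children uncovered. For (i), disjointness of $U_{s^\frown 0}$ from $U_{s^\frown 1}$ handles distinctness of branches, and nowhere density can be forced by insisting $\intt_X(U_{s^\frown 0}) \cup \intt_X(U_{s^\frown 1})$ is dense in $\intt_X(U_s)$ so that no single point of a branch intersection can contain an open set. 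For (ii), I would at each node $s$ also carve off a third piece $V_s$ — a nowhere dense compact $G_\delta$ — absorbing $\intt_X(U_s) \setminus (U_{s^\frown 0} \cup U_{s^\frown 1})$'s boundary, i.e.\ write $\intt_X(U_s) \setminus (\intt_X(U_{s^\frown0})\cup\intt_X(U_{s^\frown1}))$ as a union of countably many nowhere dense compact $G_\delta$'s using ccc and regularity, and throw all of these (over all $s$) into $\mathcal{Z}$. Because there are only countably many nodes, this adds only countably many sets, and since each non-empty regular closed $U$ still contains a full binary subtree $\{U_t : t \supseteq s\}$, the $\mathfrak{c}$-many branch-points below it are untouched, so the cardinality $\mathfrak{c}$ condition survives. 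The Malychin-style pressing-down phenomenon that the authors allude to is exactly what guarantees that a nontrivially-branching tree of compact $G_\delta$'s in a ccc compactum has $\mathfrak{c}$ branches, so the hard analytic content is really just this standard dyadic-tree-in-a-ccc-compactum construction executed with the covering bookkeeping.
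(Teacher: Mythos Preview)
Your proposal has a genuine gap at the heart of step (i): the claim that making $\intt_X(U_{s^\frown 0}) \cup \intt_X(U_{s^\frown 1})$ dense in $\intt_X(U_s)$ forces branch intersections to be nowhere dense is false. Take $X = 2^{\omega_1}$ and let $C = \{x : x(\omega) = 0\}$. Define clopen sets by $U_\emptyset = X$ and, for $s\in 2^n$, $U_{s^\frown 0} = (U_s\cap C)\cup\{x\in U_s\setminus C : x(n)=0\}$ and $U_{s^\frown 1} = \{x\in U_s\setminus C : x(n)=1\}$. These are disjoint, clopen, non-empty, and partition $U_s$, so your density hypothesis holds everywhere; yet along the branch $b=0^\omega$ one gets $\bigcap_n U_{0^n} = C\cup\{x\notin C : x\restriction\omega = 0\}$, whose interior contains the clopen set $C$. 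A height-$\omega$ dyadic tree simply need not shrink every branch to something nowhere dense: one branch can swallow a fixed open set at every stage. Your boundary-absorption device for (ii) is also suspect, since $\intt_X(U_s)\setminus(\intt_X(U_{s^\frown 0})\cup\intt_X(U_{s^\frown 1}))$ is merely locally closed, and in a compact space that is not perfectly normal there is no reason it decomposes into countably many compact $G_\delta$ sets.

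This is exactly the obstacle the paper's argument is built to overcome, and it does so by working at height $\omega_1$ rather than $\omega$. The paper embeds $X$ in $[0,1]^\theta$, fixes a continuous chain $\{M_\alpha:\alpha<\omega_1\}$ of countable elementary submodels, and looks at the fibers $[t]_X$ of the projections $\pr_{M_\alpha}$. The key step, replacing your density claim, is that the tree $T_{\omega_1,0}$ of fibers with non-empty interior has \emph{no uncountable chains} (and no uncountable antichains) by ccc; hence for every $x\in X$ there is a countable ordinal $\alpha$ at which the fiber through $x$ first becomes nowhere dense, and one takes $\mathcal Z$ to be the set of those first-nowhere-dense fibers. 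The point is that this ordinal $\alpha$ varies with $x$ and cannot be bounded by $\omega$ uniformly, which is why a single height-$\omega$ tree cannot do the job. The elementary-submodel fibers are automatically compact $G_\delta$'s and automatically partition $X$, so both of your difficulties (i) and (ii) evaporate simultaneously; the $\mathfrak c$-hitting property is then established by a separate case analysis, which does use a genuine height-$\omega$ Cantor scheme, but only locally inside each regular closed $U$ and only after the partition is already fixed.
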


\begin{proof}
Let $\theta$ be the weight of
 $X$ and for convenience we regard it as a compact subset of 
$[0,1]^{\theta}$.
Fix a continuous  elementary chain
$\{ M_\alpha : \alpha\in \omega_1\}$ of countable elementary submodels
of $H(\theta^+)$ so that for all $\beta < \alpha$, $X\in M_\beta \in
M_\alpha$. For each countable
partial function $t$ from a subset of $\theta$
into $[0,1]$, we let $[t]$ denote the $G_\delta$ subset of
 $[0,1]^{\theta}$ consisting of all total functions that extend $t$. 
For such countable functions $t$, we also let $[t]_X$ denote the
(possibly empty) set $[t]\cap X$.

For each $\alpha$, set $T_\alpha$ equal to the set of all $t\in
[0,1]^{M_\alpha\cap  \theta}$ such that $[t]_X$ is not empty. Also let
 $T_{\alpha,0} = \{ t\in T_\alpha : \intt_X([t]_X)\neq
\emptyset\}$. The members of the family $\{ [t] : t\in T_\alpha\}$ are
pairwise disjoint. Also, for $\beta < \alpha\in \omega_1$, the sets
 $T_\beta$ and $T_{\beta,0}$ are  elements of $M_\alpha$.
Since $X$ is ccc and is in $M_0$,  $T_{\alpha,0}$ is countable for
all $\alpha\in \omega_1$. 
For this reason  we also have that
$T_{\beta,0}\subset M_\alpha$  for all $\beta < \alpha\in \omega_1$.

The collection 
$T_{\omega_1,0} = \bigcup \{ T_{\alpha,0} : \alpha \in \omega_1\}$ is
a tree when ordered by $\subset$.  We again note that $[t]_X\cap
[t']_X$ is empty if $t$ and $t'$ have no common extension
(incomparable)  in $T_{\omega_1,0}$. Since $X$ is ccc, this tree has no
uncountable antichains.  In fact, we now check that $T_{\omega_1,0}$
also has no uncountable chains. Assume that $t_\beta \subset t_\alpha$
where $\beta<\alpha$, $t_\beta \in T_{\beta,0}$, and $t_\alpha\in
T_{\alpha,0}$. Note that for all $\gamma <\alpha$,
$t_\alpha \restriction M_\gamma$ is also a member
of $T_{\gamma,0}$ since $\intt_X([t_\alpha\restriction M_\gamma]_X)$
contains  $\intt_X([t_\alpha]_X)$.
Since  $X$ has no isolated points, the infinite open set 
$\intt_X([t_\alpha]_X)$ contains a disjoint pair of closed
$G_\delta$-subsets of $X$. In particular, there are
incomparable countable extensions $t_1, t_2$ of $t_\alpha$,
functions from a countable subset of $\theta$ into $[0,1]$,
 such that $[t_1]_X\cup [t_2]_X\subset \intt_X([t_\alpha]_X)$.
Since $t_\beta\in M_{\beta+1}$, 
it then follows by 
elementarity that there are incomparable $t_1,t_2\in T_{\beta+1}$ such
that $[t_1]_X\cup [t_2]_X\subset \intt_X([t_\beta]_X)$. This shows
that $\intt_X([t_\beta]_X) \setminus [t_\alpha]_X$ is not
empty. Therefore, if $\{ t_\alpha : \alpha  < \omega_1\}$ is a
chain in $T_{\omega_1,0}$, we can assume that $t_\alpha\in
T_{\alpha,0}$ for each $\alpha\in \omega_1$, and we have that
the existence of
the family   $\{ \intt_X([t_\alpha]_X) \setminus [t_{\alpha+1}]_X :
\alpha\in \omega\}$ contradicts that $X$ is ccc.

Now we use $T_{\omega_1,0}$ to define a special antichain $T$
in the tree  $T_{\omega_1} = 
\bigcup \{ T_\alpha : \alpha\in \omega_1\}$. 
A node $t$ is in $T$ if there is an $\alpha \in\omega_1$ such
that $t\in T_\alpha\setminus T_{\alpha,0}$ and for all $\beta<\alpha$,
   $t\restriction M_\beta\in T_{\beta,0}$. That is $T$ is the set of
all minimal nodes of $ T_{\omega_1} \setminus T_{\omega_1,0} $. Fix
any $x\in X$ and let $C_x = \{ t\in  T_{\omega_1} : x\in [t]\}$. Since
 $C_x$ is an uncountable chain, it is not  a subset of
$T_{\omega_1,0} $. Therefore there is a minimal $\alpha$ such that
there is a $t_x\in   T_{\alpha}\setminus T_{\alpha,0}$ with $x\in
[t_x]_X$. By the minimality of $\alpha$, $t_x\in T$. This proves
that $\mathcal Z = \{ [t]_X : t\in T\}$ is a partition of $X$.

Now let $U$ be a  non-empty regular closed subset of $X$.
We prove that the
set $\{t\in T: U\cap [t]_X\neq\emptyset\}$ has cardinality $\mathfrak c$. 
 For each non-empty regular closed $W$ of $U$, let
$\mu_W = \sup\{ \alpha \in \omega_1 : (\exists t\in T_{\alpha,0})
 W\cap \intt_X([t]_X)\neq\emptyset \}$. 
 By passing to a regular closed subset of $U$ with a minimum value
 of $\mu_U$, we can assume that
 $\mu_W = \mu_U$ for all regular closed subsets $W$ of $U$. 

 Assume first that there is a $t\in T_{\mu_U,0}$ such that $U\cap
 \intt_X( [t]_X)$ is not empty and let $\alpha = \mu_U+1$.
 Choose any regular closed subset $W$ of $U$
 that is contained in  $ \intt_X([t]_X)$. Since $\mu_W = \mu_U$,
 it follows that $W$ is covered by the family of compact nowhere dense
 sets $\{ W\cap [t']_X : t\subset t'\in T_{\alpha} \}$. By the Baire
 category theorem, this family is uncountable. Also,
  the family $\{ t' \in [0,1]^{M_\alpha\cap \theta } : [t']_X\cap W
  \neq \emptyset \}$ is equal to $\pr_{M_\alpha}(W)$ and so
is an uncountable closed subset of the metric space
   $[0,1]^{M_\alpha\cap \theta }$ and will therefore  have
cardinality $\mathfrak 
  c$.  All but countably many of the $t'\in T_{\alpha}$ with
   $W\cap [t']_X\neq\emptyset$ are in $T$. This completes the proof in
  this case. 

The same proof as in the previous paragraph, with $W=U$,
 shows that if
 $\mu_U=0$ and $U\cap \intt_X([t]_X)$ is empty for all
   $t\in T_{0,0}$, then  $\{ t\in T : U\cap [t]_X\neq\emptyset\}$ has
cardinality $\mathfrak c$. Similarly, 
if $\mu_U$ is a successor
ordinal, $\beta+1$, then we let $W$ be a non-empty regular closed
subset of $U\cap \intt_X([t]_X)$ for any $t\in T_{\beta,0}$ such
that $U\cap \intt_X([t]_X)$ is not empty, and proceed as above.

Now we consider the final case where
$\mu_U$ is a limit ordinal and, 
if $\mu_U<\omega_1$, that  $U\cap \intt_X([t]_X)$ is empty for all
 $t\in T_{\mu_U,0}$. 
Choose any
 $\alpha_0<\mu_U$ and $t_0\in T_{\alpha_0,0}$ such that $W\cap
 \intt_X[t_0]_X$ is not empty. We have started a recursive
 construction of choosing a strictly increasing sequence $\{ \alpha_n
 : n\in \omega\} \subset \mu_U$ and a family $\{ t_s : s\in
 2^{<\omega} = \bigcup_n 2^n\}$ such that, for each $n\in\omega$
 \begin{enumerate}
 \item $U\cap \intt_X([t_s]_X)$ is not empty for each $s\in 2^n$,
   \item $\{ t_s : s\in 2^n\}$ are distinct elements of
     $T_{\alpha_n,0}$
\item $t_{s'} \subset t_s$ for all $s' = s\restriction m$ and $m<n$.
 \end{enumerate}
 Assume that $n\in \omega$ and that we have chosen
 $\alpha_n<\mu_U$ and the sequence
  $\{ t_s : s\in 2^n\}$ so that the above conditions hold. 
 For each $s\in 2^n$, choose a regular closed subset $U_s$ of 
 $ U\cap \intt_X([t_s]_X)$. Since there are no uncountable chains
 in $T_{\omega_1,0}$, there is an $\alpha_s<\omega_1$ such that,
 for some $t_s'\in T_{\alpha_s,0}$, $U_s \cap \intt_X([t_s']_X)$ is
 not empty and $\intt_X(U_s\setminus [t_s'])$ is not empty.
 Since $\mu_W = \mu_U$ for all regular closed subsets of $U$,
 there is also a $t_s''\in T_{\alpha_s,0}$ 
such that the open set 
  $\intt_X(U_s\setminus [t_s'])$ meets $\intt_X([t_s''])$.
Choose any $\alpha_{n+1} < \mu_U$ so that $\alpha_s \leq \alpha_{n+1}$
for all $s\in 2^n$. For each $s\in 2^n$, there are $t_{s^\frown 0}$
and
$t_{s^\frown 1}$ in $T_{\alpha_{n+1},0}$ such that
$t_s'\subset t_{s^\frown 0}$, $t_s''\subset t_{s^\frown 1}$,
$U_s \cap \intt_X([t_{s^\frown 0}]_X)$ and
$U_s \cap \intt_X([t_{s^\frown 1}]_X)$ are not empty. This completes
the recursive construction. Let $\alpha = \bigcup_n \alpha_n$. 
Let $\rho$ be any element of $2^\omega$,
and let $t_\rho = \bigcup\{ t_{\rho\restriction n} : n\in
\omega\}$.
Since $M_\alpha = \bigcup \{ M_{\alpha_n} : n\in \omega\}$,
 $t_\rho\in T_{\alpha}$ for each $\rho\in 2^\omega$. Since $U$ is
compact and $U\cap [t_\rho\restriction M_{\alpha_n}]_X$ is not empty
 for each
$n\in\omega$,  we have that $U\cap [t_\rho]_X$ is not empty for each
$\rho\in 2^\omega$. For each $\rho\in 2^\omega$ such that
$t_\rho\notin T_{\alpha,0}$, it follows that $t_\rho\in T$ since
$t_\rho\restriction \alpha_n \in T_{\alpha_n,0}$ for all $n\in
\omega$.
This completes the proof of the Lemma.
\end{proof}

Now we prove the main theorem.

\begin{theorem} A compact space is densely $k$-separable
if  and only if it has countable $\pi$-weight.
\end{theorem}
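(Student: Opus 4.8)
The plan is to prove the two implications separately. The implication from countable $\pi$-weight to dense $k$-separability is immediate from Proposition~\ref{ccc}: if $\pi(X)=\aleph_0$ then $\delta_k(X)\le\pi(X)=\aleph_0$. For the converse I would argue by contradiction, assuming throughout that $X$ is compact with $\delta_k(X)=\aleph_0<\pi(X)$.

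First I would perform two reductions. By Lemma~\ref{reflect} (applied with $\kappa=\aleph_0$) we may replace $X$ by a quasi-open continuous image, and so assume $w(X)\le\mathfrak c$ while retaining $\delta_k(X)=\aleph_0<\pi(X)$. Since $\delta_k(X)<\pi(X)$, Proposition~\ref{uniform} (and its proof) lets us pass to a regular closed subspace with uniform $\pi$-weight; by Proposition~\ref{regclosed} this does not enlarge $\delta_k$ or $w$, and $\pi$ remains uncountable. Now $X$ is ccc because $c(X)\le\delta_k(X)=\aleph_0$ by Proposition~\ref{ccc}, and $X$ has no isolated points, since an isolated point $p$ would make $\{p\}$ a regular closed set of $\pi$-weight $1\neq\pi(X)$, contradicting uniformity. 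Thus Lemma~\ref{tree} applies and yields a partition $\mathcal Z$ of $X$ into nowhere dense compact $G_\delta$'s such that every non-empty regular closed subset — and hence, by shrinking to a regular closed subset, every non-empty open subset — of $X$ meets $\mathfrak c$ many members of $\mathcal Z$.

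The heart of the argument is to produce a single dense subset of $X$ witnessing $\delta_k(X)>\aleph_0$. Using Corollary~\ref{ctbly}, fix a $G_\delta$-dense $Y\subseteq X$ with $d(Y)=\pi(X)>\aleph_0$; then every dense subspace of $Y$ is non-separable. I would build, by recursion on $\alpha<\lambda\le w(X)$, points $x_\alpha\in Y$ together with the member $Z_\alpha\in\mathcal Z$ containing $x_\alpha$, so that (i) the $Z_\alpha$'s are pairwise distinct, (ii) $x_\alpha\notin\overline{\{x_\beta:\beta<\alpha\}}$, and (iii) the set $D=\{x_\alpha:\alpha<\lambda\}$ is dense in $X$; for (iii) one enumerates a base of $X$ and aims the recursion at its members. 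At a stage $\alpha$, as long as $\{x_\beta:\beta<\alpha\}$ is not dense, there is a non-empty open set $W$ (inside the current target basic open set) with $W\setminus\overline{\{x_\beta:\beta<\alpha\}}$ non-empty; this open set meets $\mathfrak c$ members of $\mathcal Z$, while at most $|\alpha|<\mathfrak c$ of them already contain some $x_\beta$, so there is a \emph{fresh} $Z\in\mathcal Z$ meeting it. Since $Z\cap W$ is a non-empty $G_\delta$ and $Y$ is $G_\delta$-dense, I can choose $x_\alpha\in Z\cap W\cap Y$ lying outside $\overline{\{x_\beta:\beta<\alpha\}}$. When $\{x_\beta:\beta<\alpha\}$ finally becomes dense the recursion halts with $D$ dense in $X$. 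By construction $\langle x_\alpha:\alpha<\lambda\rangle$ is left-separated and each $Z_\alpha$ contains $x_\alpha$ but no $x_\beta$ with $\beta\neq\alpha$, so Lemma~\ref{leftsep} applies: every compact, hence every $\sigma$-compact, subset of $D$ is countable. But $D$ is dense in $X$, so $d_k(D)\le\delta_k(X)=\aleph_0$, giving a $\sigma$-compact $Z_0\subseteq D$ dense in $D$; then $Z_0$ is countable, so $D$ is separable, whence $d(Y)\le d(D)\le\aleph_0$, contradicting $d(Y)=\pi(X)>\aleph_0$.

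I expect the recursive construction of $D$ to be the delicate step, since one must simultaneously maintain left-separation, keep the chosen points in distinct members of $\mathcal Z$, push the set toward density, and never leave the prescribed $G_\delta$-dense set $Y$. The device that makes these demands compatible is the interplay between the ``$\mathfrak c$ many pieces'' conclusion of Lemma~\ref{tree}, which always supplies an unused member of $\mathcal Z$ inside any target open set, and the $G_\delta$-density of $Y$, which ensures the required point of that piece can be found inside $Y$.
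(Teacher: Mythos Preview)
Your proposal is correct and follows essentially the same route as the paper: the same two reductions (Lemma~\ref{reflect} and Proposition~\ref{uniform}), the same use of Lemma~\ref{tree} and Corollary~\ref{ctbly}, the same recursive construction of a left-separated dense subset of $Y$ hitting pairwise distinct members of $\mathcal Z$, and the same contradiction via Lemma~\ref{leftsep}. The only differences are cosmetic---you enumerate a base rather than a $\pi$-base and stop the recursion when density is achieved, and you spell out explicitly why $X$ has no isolated points---but the argument is the paper's argument.
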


\begin{proof}
It follows from Proposition \ref{ccc} that $\delta_k(X) = \aleph_0$ for
all compact spaces of countable $\pi$-weight.  For the other
direction assume
that $X$ is
compact and that $\delta_k(X) = \aleph_0$.
By Proposition \ref{ccc}, $X$ is ccc.
Now we assume
that $\aleph_0 < \pi(X)$ and work towards a contradiction.
  By Lemma
\ref{reflect}, we may replace $X$ by a quasi-open continuous image
and thereby  assume that the weight of $X$ is at most
$\mathfrak c$. By Proposition \ref{uniform}, we can pass to a suitable
regular closed subset of $X$ and thereby assume that $X$ has uniform
$\pi$-weight. Now apply Lemma \ref{tree}, choose a partition $\mathcal
Z$ of $X$ by compact $G_\delta$'s with the property that the set
 $\{ Z\in \mathcal Z : W\cap Z\neq \emptyset\}$ has cardinality
$\mathfrak c$ for each non-empty regular closed subset $W$ of $X$. By
Corollary  \ref{ctbly}, we choose a $G_\delta$-dense subset $Y$ of $X$ so that
$Y$ has no dense subset of cardinality less than $\pi(X)$.
Of course this means 
that $Y$ meets every member of $\mathcal Z$. Fix an enumeration
 $\{ U_\alpha : \alpha < \pi(X)\}$ of a family of regular closed sets
whose interiors form a $\pi$-base for $X$. Let $y_0$ be any element
of $Y\cap \intt_X(U_0)$ and let $Z_0\in \mathcal Z$ be such
that $y_0\in Z_0$. Assume, by induction that $\alpha<\pi(X)$ and we
have chosen a  sequence $\{ y_\beta : \beta < \alpha\}\subset Y$
and a sequence $\{ Z_\beta : \beta < \alpha\}\subset \mathcal Z$ so
that for all $\beta < \alpha$
\begin{enumerate}
  \item  $y_\beta$ is not in the closure of $\{ y_\xi : \xi <\beta\}$,
  \item $Z_\beta\notin \{ Z_\xi : \xi<\beta\}$,
    \item $\{ y_\xi : \xi\leq \beta\}$  meets $ \intt_X(U_\beta)$.
\end{enumerate}
We choose $y_\alpha$ and $Z_\alpha$ as follows. Since
$\{ y_\beta : \beta < \alpha\}$ is not dense, we may choose
the minimal  $\gamma_\alpha<\pi(X)$ such that
$\intt_X(U_{\gamma_\alpha})$ is disjoint from 
$\{ y_\beta : \beta < \alpha\}$. 
Choose $\delta_\alpha$ so
that $U_{\delta_\alpha}$ is a subset of
 $\intt_X(U_{\gamma_\alpha})\setminus \overline{\{y_\beta: \beta <
  \alpha\}}$. We note, by induction assumption (3),  that
 if $\{y_\beta : \beta < \alpha\}$ is
disjoint from $\intt_X(U_\alpha)$, then $\gamma_\alpha = \alpha$.
 Choose any
$Z_\alpha\in \mathcal Z\setminus \{ Z_\beta : \beta < \alpha\}$ so
that $Z_\alpha\cap U_{\delta_\alpha}$ is not empty. Since $Y$ is
$G_\delta$-dense, we can choose $y_\alpha\in Z_\alpha\cap
\intt_X(U_{\gamma_\alpha})$. This completes the inductive construction
of the left-separated sequence $\{ y_\alpha : \alpha < \pi(X)\}$
together with the sequence $\{ Z_\alpha : \alpha < \pi(X)\}\subset
\mathcal Z$. By Lemma \ref{leftsep}, each compact subset of
 $\{ y_\alpha : \alpha < \pi(X)\}$ is countable. 
This proves that $\{ y_\alpha : \alpha < \pi(X)\}$ does not have a
$\sigma$-compact subset that is a dense subset of $Y$. However, we
have ensured that $\{ y_\beta : \beta \leq \alpha\}\cap
\intt_X(U_\alpha)$ is not empty for all $\alpha<\pi(X)$,
and so $\{ y_\alpha : \alpha <\pi(X)\}$ is a dense subset of $X$.
This contradicts that $\delta_k(X) = \aleph_0$.
\end{proof}

\begin{bibdiv}

\def\cprime{$'$} 

\begin{biblist}

\bib{ArhStav}{article}{
   author={Arhangel\cprime ski\u\i , A. V.},
   author={Stavrova, D. N.},
   title={On a common generalization of $k$-spaces and spaces with countable
   tightness},
   journal={Topology Appl.},
   volume={51},
   date={1993},
   number={3},
   pages={261--268},
   issn={0166-8641},
   review={\MR{1237392}},
}

\bib{elem88}{article}{
   author={Dow, Alan},
   title={An introduction to applications of elementary submodels to
   topology},
   journal={Topology Proc.},
   volume={13},
   date={1988},
   number={1},
   pages={17--72},
   issn={0146-4124},
   review={\MR{1031969}},
}

\bib{DowMoore}{article}{
   author={Dow, Alan},
   author={Moore, Justin},
   title={Tightness in $\sigma$-compact spaces},
   journal={Topology Proc.},
   volume={46},
   date={2015},
   pages={213--232},
   issn={0146-4124},
   review={\MR{3224176}},
}

\bib{10years}{book}{
   author={Juh\'asz, Istv\'an},
   title={Cardinal functions in topology---ten years later},
   series={Mathematical Centre Tracts},
   volume={123},
   edition={2},
   publisher={Mathematisch Centrum, Amsterdam},
   date={1980},
   pages={iv+160},
   isbn={90-6196-196-3},
   review={\MR{576927}},
}
  
\bib{JuhGerlits}{article}{
   author={Gerlits, J.},
   author={Juh\'asz, I.},
   title={On left-separated compact spaces},
   journal={Comment. Math. Univ. Carolinae},
   volume={19},
   date={1978},
   number={1},
   pages={53--62},
   issn={0010-2628},
   review={\MR{0487982}},
}

\bib{JuhvM}{article}{
   author={Juh\'asz, I.},
   author={van Mill, J.},
   title={Variations on countable tightness},
   journal={Acta Math. Hungar.},
   volume={153},
   date={2017},
   number={1},
   pages={75--82},
   issn={0236-5294},
   review={\MR{3713564}},
}

\bib{JuhSh}{article}{
   author={Juh\'asz, I.},
   author={Shelah, S.},
   title={$\pi (X)=\delta(X)$ for compact $X$},
   journal={Topology Appl.},
   volume={32},
   date={1989},
   number={3},
   pages={289--294},
   issn={0166-8641},
   review={\MR{1007107}},
}
		
\bib{Sapirovski}{article}{
   author={\v Sapirovski\u\i , B.},
   title={Tightness and related notions},
   language={Russian},
   conference={
      title={Topological spaces and their mappings (Russian)},
   },
   book={
      publisher={Latv. Gos. Univ., Riga},
   },
   date={1979},
   pages={119--131, 154},
   review={\MR{569886}},
}

\end{biblist}
\end{bibdiv}
\end{document}